\def\@tocline#1#2#3#4#5#6#7{\relax
	\ifnum #1>\c@tocdepth 
	\else
	\par \addpenalty\@secpenalty\addvspace{#2}%
	\begingroup \hyphenpenalty\@M
	\@ifempty{#4}{%
		\@tempdima\csname r@tocindent\number#1\endcsname\relax
	}{%
		\@tempdima#4\relax
	}%
	\parindent\z@ \leftskip#3\relax \advance\leftskip\@tempdima\relax
	\rightskip\@pnumwidth plus4em \parfillskip-\@pnumwidth
	#5\leavevmode\hskip-\@tempdima
	\ifcase #1
	\or\or \hskip 2em \or \hskip 2em \else \hskip 3em \fi%
	#6\nobreak\relax
	\dotfill\hbox to\@pnumwidth{\@tocpagenum{#7}}\par
	\nobreak
	\endgroup
	\fi}
\newtheorem{intro-thm}{Theorem}[]
\theoremstyle{plain}
\newtheorem{thm}{Theorem}[section]
\newtheorem{theorem}[thm]{Theorem}
\newtheorem{question}[thm]{Question}
\newtheorem{lemma}[thm]{Lemma}
\newtheorem{corollary}[thm]{Corollary}
\newtheorem{proposition}[thm]{Proposition}
\theoremstyle{definition}
\newtheorem{remark}[thm]{Remark}
\newtheorem{construction}[thm]{Construction}
\newtheorem{definition}[thm]{Definition}
\newcommand{\Hom}{{\rm Hom}}
\newcommand{\im}{{\rm Im}}
\newcommand{\Res}{{\rm Res}}
\newcommand{\inv}{{\rm inv}}
\newcommand{\A}{{\mathbb A}}
\newcommand{\G}{{\mathbb G}}
\renewcommand{\P}{{\mathbb P}}
\newcommand{\Q}{{\mathbb Q}}
\newcommand{\Z}{{\mathbb Z}}
\newcommand{\et}{\text{et}}
\DeclareMathOperator{\Br}{Br}
\begin{document}
	\title{
    Pullback Method with Applications to Severi--Brauer Fibrations}
	\author[M. Biswas]{Mridul Biswas} \address{School of Mathematics and Statistics, University of Canterbury, Christchurch, 8041, New Zealand } \email{biswasmridul123@gmail.com}
	\author[D. C. R]{Divyasree C Ramachandran} \address{Department of Mathematics, Indian Institute of Science Education and Research Pune, Dr Homi Bhabha Rd, Pashan, Pune, 411008, India}\email{crdivya99@gmail.com}
	\author[B. Samanta]{Biswanath Samanta} \address{Department of Mathematics, Indian Institute of Science Education and Research Pune, Dr Homi Bhabha Rd, Pashan, Pune, 411008, India}\email{biswa.uoh@gmail.com}
	
	\thanks{Keywords: Rational points, pullback method,  Brauer--Manin obstruction, Severi--Brauer fibration, index one}
	
	\begin{abstract}
Given a variety with a suitable Brauer class, we present a general pullback construction that produces varieties that has Brauer--Manin obstruction to the existence of rational points. We then study Severi--Brauer fibrations and their Brauer groups without relying on explicit defining equations. As a key application, we show that there exist Severi--Brauer fibrations with index one that fails Hasse principle.
	\end{abstract}
	
	\maketitle

\section{Introduction}
\subsection{Background}
Let $X$ be a smooth projective geometrically integral variety defined over a global field $k$. A central question in arithmetic geometry is whether $X$ has a $k$-point. A necessary condition for the existence of such a point is that $X$ has $k_{v}$-points for all places $v$ of $k$. We know that $X(k_{v}) \neq \emptyset$ for almost all places $v$ of $k$ \cite[Theorem 7.7.2]{poonen}, which allows us to check whether \( X(\A_k) \neq \emptyset \) in a finite process.  However, this condition is not sufficient for many varieties; it is possible for \( X(k_v) \neq \emptyset \) for all places \( v \) of \( k \) while \( X(k) = \emptyset \). This is referred to as the failure of the Hasse principle.
    
	In 1970, Manin introduced a construction that explains the failure of the Hasse principle for all the varieties known at that time, which is now known as the Brauer--Manin obstruction. By a `nice' variety we mean a smooth projective, geometrically integral variety. Although there are counterexamples to the Hasse principle that cannot be captured by the Brauer--Manin obstruction \cite{sko}, Colliot-Th\'{e}l\`ene conjectured that for `nice' rationally connected varieties, the Brauer--Manin obstruction is the only obstruction to the Hasse principle.  When it is known to be the only obstruction, this gives a finite procedure for checking the existence of rational points. 
	
    By the functoriality of the Brauer group, we have a pairing \(\langle \cdot, \cdot \rangle : X(\mathbb{A}_k) \times \Br X \to \mathbb{Q}/\mathbb{Z}\). The Brauer--Manin set is defined as the set of all adelic points orthogonal to $\Br X$ and is denoted by \( X(\A_k)^{\Br} \). We have $X(k) \subset X(\A_k)^{\Br} \subset X(\A_k)$. If $X(\A_k)$ is nonempty and $X(\A_k)^{\Br}$ is empty, we say that $X$ has Brauer--Manin obstruction to the existence of rational points. If $X(\A_k)^{\Br}$ is a proper subset of $X(\A_k)$ then we say $X$ has Brauer--Manin obstruction to weak approximation.

In the literature, most examples of computing the Brauer--Manin obstruction rely on explicit models. By contrast, the following pullback theorem depends only on the existence of a Brauer class satisfying suitable conditions, 
making it applicable even to varieties without explicit defining equations. The result is inspired by \cite[Theorem~7.1]{berg}, which treated the case of conic bundles, while the pullback technique itself originates in \cite{cp}. Building on these ideas, we extend the method to a broader class of varieties under suitable hypotheses. As a key application, we construct Severi--Brauer fibrations of index one that fail 
the Hasse principle.

\begin{theorem} \label{pullback}
Let \(X\) be a 
variety over a global field \(k\), equipped with a 
dominant morphism \(\pi : X \to \mathbb{P}^1\) whose generic fiber is 
geometrically irreducible. Suppose there exists 
\(\alpha \in \Br X\) giving a Brauer--Manin obstruction to weak approximation on \(X\), and a rational point \(P \in X(k)\) such that 
\(\alpha(P) = 0 \in \operatorname{Br} k\). Then there exists a nonconstant polynomial \(g \in k[t]\) such that the 
base change of \(X\) by \(g\) has Brauer--Manin obstruction to the 
existence of rational points.
\end{theorem}

\begin{remark}
If \(X(k) = \emptyset\), then for any base change \(\psi : \mathbb{P}^1_k \to \mathbb{P}^1_k\) we still have \(X_\psi(k) = \emptyset\), so there is no question of a Brauer--Manin obstruction. This explains the assumption \(X(k) \neq \emptyset\) in the theorem. Moreover, the condition that there exists a point \(P \in X(k)\) with \(\alpha(P) = 0\) can be replaced by requiring that \(0\) lies in the image of the evaluation of \(\alpha\) at each place of \(k\). Finally, one may also pull back \(X\) in such a way that the resulting variety has no Brauer--Manin obstruction to weak approximation.
\end{remark}

 For proper scheme $X$, due to the compactness of $X(\A_k)$, if \( X(\A_k)^{\Br} = \emptyset \), then this is captured by a finite subgroup of $\Br X$. We also study the exponent of finite group capturing the Brauer--Manin obstruction. Since a single Brauer class captures the obstruction in most cases, we investigate whether there is a bound on the order of that class.

\begin{definition}
  We say `a finite group $G$ captures Brauer--Manin Obstruction' if there exists a `nice' scheme $X$ for which $X(\A)^G=\emptyset$ but $X(\A)^H \neq\emptyset$ for all proper subgroups $H$ of $G$.
\end{definition}

 Most of the examples of Brauer--Manin obstruction in the literature are captured by a cyclic subgroup of $\Br X$. Recently, Jennifer Berg and their collaborators in an inspiring paper \cite{berg}, have shown that the subgroup may require arbitrarily many generators. In particular, for any given natural number $n$, $(\Z/2\Z)^n$ captures the obstruction. This motivates the following question.

\begin{question}\label{mainquestion}
    Which finite abelian groups can capture the Brauer--Manin Obstruction? 
\end{question}

To address this question, we study the Brauer--Manin obstruction in the setting of Severi--Brauer fibrations, as an application of Theorem~\ref{pullback}. It is well known that computing the Brauer group of a general Severi--Brauer fibration is a difficult problem. Since the Brauer group is a birational invariant of smooth projective varieties, a natural approach is to attempt its computation using only the generic fiber. The present work represents a step in this direction.

The index of the variety is the greatest common divisor of all possible degrees of points. For a $k$-variety \(X\), it is clear that if there exists a $k$-rational point on the variety, then the index equals 1. The converse fails in general, and we exhibit such a phenomenon in Theorem \ref{main}.
   It is a general philosophy that Brauer--Manin obstruction to the Hasse principle for rational points on a variety may exist even when there is no such obstruction for the existence of a zero-cycle of degree one. For Severi--Brauer fibrations, however, the Brauer--Manin obstruction is the only obstruction to the existence of zero-cycles of degree one. In particular, for these varieties, the absence of such an obstruction is equivalent to having index one. In the following, we provide examples illustrating this philosophy.
	
\begin{theorem}\label{main}
Given an odd prime $p$ and a number field $k$ which contains a primitive $p$th root of unity, there exists a Severi--Brauer fibration over $k$ with index one and has Brauer--Manin obstruction to the existence of rational points captured by a $p$-torsion Brauer class. 
\end{theorem}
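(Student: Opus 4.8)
The plan is to realise the desired object as the natural degree‑$p$ generalisation of a Châtelet surface (in the spirit of the Châtelet $p$‑folds of \cite{vv}): a nice model $\pi\colon\mathcal{X}\to\mathbb{P}^1_k$ whose generic fibre is the Severi--Brauer variety of the cyclic symbol algebra $A=\big(a,\,\varpi f_1(t)f_2(t)\big)_p$ over $k(t)$, where $a,\varpi\in k^{*}$ are constants and $f_1,f_2\in k[t]$ each have degree $p$ with $f_1f_2$ having no root in $k$. One takes the projective closure over $\mathbb{P}^1$ of $N_{k(\sqrt[p]{a})/k}(\vec z)=\varpi f_1(x)f_2(x)$ --- of degree $2p$, so that the fibre at $\infty$ stays smooth --- and resolves; in characteristic $0$ the result is smooth, projective and geometrically integral, and the degenerate fibre over a root $\theta$ of $f_1f_2$ is geometrically a union of $p$ hyperplanes through a single rational point $q_\theta$. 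The obstructing class is to be $\mathcal{B}=\big(a,\,\varpi f_1(t)\big)_p$, viewed in $\Br k(\mathcal{X})$.

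The first, purely algebraic, point to establish is that $\mathcal{B}$ is unramified on $\mathcal{X}$ and has order exactly $p$. I would check unramifiedness divisor by divisor from the geometry of the fibres rather than from equations: the constant $\varpi$ contributes no residue; over a root of $f_2$ the function $f_1$ is a unit, so again no residue; over a root $\theta$ of $f_1$ the vertical components of $\mathcal{X}$ have function fields containing $k(\theta)(\sqrt[p]{a})$, so $a$ is a $p$th power along them and the residue of $(a,f_1)_p$ dies; and over $t=\infty$ the residue is $a^{-p}$, a $p$th power. Since $k(\mathcal{X})=k(t)\big(\mathrm{SB}(A)\big)$ splits $A$, one has $(a,\varpi f_1)_p=-(a,f_2)_p$ there; the standard description of the Brauer group of a Severi--Brauer variety (Amitsur) identifies $\ker\big(\Br k(t)\to\Br k(\mathcal{X})\big)$ with $\langle[A]\rangle$, and comparing residues along the roots of $f_2$ shows $\mathcal{B}\notin\langle[A]\rangle$, so $\mathcal{B}\neq0$ and hence $\mathcal{B}\in\Br\mathcal{X}$ has order $p$. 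The same relation lets one evaluate $\mathcal{B}$ at a point $q_\theta$ in a degenerate fibre with $\theta$ a root of $f_2$: there $f_1(\theta)\neq0$ and $\langle q_\theta,\mathcal{B}\rangle=\mathrm{inv}_v\big(a,\varpi f_1(\theta)\big)_v$ is an honest Hilbert symbol.

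The arithmetic heart is to choose $a,\varpi,f_1,f_2$ so that $\sum_v\langle P_v,\mathcal{B}\rangle$ is a fixed nonzero element of $\tfrac1p\mathbb{Z}/\mathbb{Z}$ for every adelic point while $\mathcal{X}(\mathbb{A}_k)\neq\emptyset$. I would fix a finite place $v_0$ with $\#\kappa_{v_0}\equiv1\pmod p$, take $\varpi$ to be a $v_0$‑uniformizer that is a unit away from a fixed finite bad set $S\ni v_0$, and --- using surjectivity of $k^{*}/(k^{*})^p\to\prod_{v\in S'}k_v^{*}/(k_v^{*})^p$ for a finite $S'\supseteq S\cup\{v:v\mid p\}$, available since $p$ is odd --- choose $a\in k^{*}$ that is a $p$th power in $k_v$ for all $v\in S'\setminus\{v_0\}$ and at $v_0$ is a unit with non‑$p$th‑power residue, so that $k(\sqrt[p]{a})/k$ is unramified of degree $p$ at $v_0$; moreover $f_1$ is chosen with good reduction and no residual root at $v_0$, and $f_2$ with a $k_{v_0}$‑rational root. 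Then at $v\mid p\infty$ and at $v\in S'\setminus\{v_0\}$ the class $\mathcal{B}$ restricts to $0$ on $\mathcal{X}_{k_v}$ because $a$ is a local $p$th power (automatic for real $v$, $p$ being odd); at the ``good'' places the coprimality of $f_1,f_2$ forces $v\big(\varpi f_1(x_v)\big)\equiv0\pmod p$ on every $k_v$‑soluble fibre, killing each local invariant (at the $q_\theta$ as well, using that $\varpi$ is a local unit and $a$ a $p$th power on the vertical components); and at $v_0$ the solubility constraint $v_0\big(\varpi f_1(x)f_2(x)\big)\equiv0\pmod p$ together with $v_0\big(\varpi f_1(x)\big)\equiv1\pmod p$ --- valid on the $v_0$‑soluble smooth fibres near the root of $f_2$ and at $q_\theta$ alike --- pins $\langle P_{v_0},\mathcal{B}\rangle$ to $\mathrm{inv}_{v_0}(a,\varpi)_{v_0}$, the \emph{same} nonzero value for all $P_{v_0}\in\mathcal{X}(k_{v_0})$. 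Local solubility comes out along the way (a soluble smooth fibre over some $x$ with $\varpi f_1f_2(x)$ a local unit at good places; the point $q_\theta$ over a $k_v$‑root of $f_2$; any fibre where $a$ is a local $p$th power). Summing, $\sum_v\langle P_v,\mathcal{B}\rangle=\mathrm{inv}_{v_0}(a,\varpi)_{v_0}\neq0$, so $\emptyset\neq\mathcal{X}(\mathbb{A}_k)$ while $\mathcal{X}(\mathbb{A}_k)^{\Br}\subseteq\mathcal{X}(\mathbb{A}_k)^{\langle\mathcal{B}\rangle}=\emptyset$, and the Hasse principle fails with the obstruction captured by the single class $\mathcal{B}$ of order $p$.

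The step I expect to be the main obstacle --- and the place where Poonen's valuation casework gets replaced --- is arranging the local picture at $v_0$ so that \emph{every} point of $\mathcal{X}(k_{v_0})$, in degenerate fibres and in $v_0$‑soluble smooth fibres alike, produces the identical invariant: one must use the congruences above to exclude any $k_{v_0}$‑soluble fibre on which $\mathcal{B}$ would evaluate trivially, and this is what forces the choices of $v_0$, $\varpi$ and the reduction types of $f_1,f_2$. A secondary technical burden is producing a genuinely smooth projective model $\mathcal{X}$ for which the residue and evaluation statements above hold verbatim at the resolved degenerate fibres, and checking that resolving introduces no stray $k_v$‑points that escape the analysis.
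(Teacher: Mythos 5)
Your plan is a genuinely different route from the paper's. You are proposing the explicit Poonen/V\'arilly-Alvarado--Viray construction: write down a norm-form bundle $N_{k(\sqrt[p]{a})/k}(\vec z)=\varpi f_1(x)f_2(x)$ over $\mathbb{P}^1_k$, impose local conditions on $a,\varpi,f_1,f_2$ at a chosen place $v_0$, and pin down every local invariant of $(a,\varpi f_1)$ by valuation casework on an explicit model. The paper deliberately avoids exactly this: the bundle is specified only through its generic fibre (the Severi--Brauer variety of $(a,f)$, with a smooth projective model from Hironaka), Faltings' theorem produces a value $c$ with $f_1(c)$ not a local $p$th power while $f(c)$ is one, and the Colliot-Th\'el\`ene--Poonen/Berg et al.\ pullback along a polynomial $g$ converts a bundle with a rational point into one with adelic points but empty Brauer--Manin set, the obstructing class being $(a,f_1\circ g)$; no residues or invariants are ever computed on a defining equation. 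Your approach, if completed, buys an explicit model and explicit invariants, but the deferred work you flag (smooth model, residues along the resolved degenerate fibres, evaluation at the singular points $q_\theta$, showing \emph{every} point of $\mathcal{X}(k_{v_0})$ gives the same nonzero invariant) is precisely the content of the Ch\^atelet $p$-fold paper of V\'arilly-Alvarado--Viray and is not a routine verification; it is also exactly the kind of computation the present paper was written to avoid, though the theorem statement itself does not forbid it.

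There is, however, one genuine gap beyond the deferred computations: your fibres are norm hypersurfaces, not Severi--Brauer varieties, and the opening identification of the generic fibre with the Severi--Brauer variety of $(a,\varpi f_1f_2)_p$ over $k(t)$ is asserted, not proved. A degree-$p$ hypersurface of this shape is not a twisted form of $\mathbb{P}^{p-1}$ (already for $p=3$ it is a cubic surface, not a form of $\mathbb{P}^2$); what you would need is that it is $k(t)$-birational to the Severi--Brauer variety, so that your $\mathcal{X}$ is a Severi--Brauer bundle in the sense of Construction \ref{construction} and the obstruction transfers (using that emptiness of the Brauer--Manin set is a birational invariant of nice varieties). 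That birational equivalence is classical for $p=2$ and $p=3$, but for general prime $p$ it is not justified in your sketch and, to my knowledge, is not known; the two varieties merely split each other's class, which is far from birationality (compare the open Amitsur-type problems). Without this step your argument yields a nice variety violating the Hasse principle with a single class of order $p$ --- i.e.\ the V\'arilly-Alvarado--Viray theorem and Corollary \ref{sub of main} --- but not the stated existence of a \emph{Severi--Brauer bundle} with this property. Either supply the birational identification for all $p$, or work directly with the Severi--Brauer bundle as the paper does, in which case the explicit degenerate-fibre and residue computations your local analysis relies on are no longer available and must be replaced by arguments like Theorem \ref{p-torsion  brauer group} and the pullback method.
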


As a corollary, we obtain a partial answer to Question~\ref{mainquestion}.
    
    \begin{corollary}\label{sub of main}
         For any odd prime $p$, the group $\Z/p\Z$ can capture the Brauer--Manin obstruction. Therefore, the exponent of a finite group capturing Brauer--Manin obstruction has no upper bound.
	\end{corollary}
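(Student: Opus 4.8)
The plan is to obtain Corollary \ref{sub of main} as a formal consequence of Theorem \ref{main}; essentially all of the work is already contained in that theorem, and what remains is only to match its conclusion against the definition of a finite group \emph{capturing} the Brauer--Manin obstruction. Fix an odd prime $p$ and let $k$ be a number field containing a primitive $p$th root of unity (for instance $k=\Q(\zeta_p)$). Let $X$ be the Severi--Brauer bundle produced by Theorem \ref{main}, which is a ``nice'' $k$-scheme, and let $\alpha\in\Br X$ be the Brauer class of order $p$ that captures the obstruction, so that $X(\A_k)\neq\emptyset$ while $X(\A_k)^{\langle\alpha\rangle}=\emptyset$. Put $G=\langle\alpha\rangle\subseteq\Br X$; since $\alpha$ has order $p$ we have $G\cong\Z/p\Z$.

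Next I would verify the two conditions in the definition for this $G$. By the last sentence of Theorem \ref{main} we have $X(\A_k)^G=X(\A_k)^{\langle\alpha\rangle}=\emptyset$. On the other hand, because $p$ is prime the group $G$ has a unique proper subgroup, namely the trivial one $H=0$, and $X(\A_k)^{0}=X(\A_k)$, which is nonempty precisely because $X$ has a Brauer--Manin obstruction (by definition this requires $X(\A_k)\neq\emptyset$). Hence $X$ witnesses that $\Z/p\Z$ captures the Brauer--Manin obstruction.

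Finally, since this holds for \emph{every} odd prime $p$, the set of exponents of finite groups that capture the Brauer--Manin obstruction contains all odd primes and is therefore unbounded, which gives the second assertion of the corollary.

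The one point that needs care — and it is a matter of bookkeeping rather than a genuine obstacle — is to ensure that the phrase ``the obstruction is captured by a single Brauer class of order $p$'' in Theorem \ref{main} is used in the strong form $X(\A_k)^{\langle\alpha\rangle}=\emptyset$ (i.e.\ pairing the adelic points against $\alpha$ alone already annihilates $X(\A_k)$), and that the bundle $X$ indeed qualifies as ``nice'', i.e.\ is smooth, projective and geometrically integral; both facts are supplied by the proof of Theorem \ref{main}.
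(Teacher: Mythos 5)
Your proposal is correct and follows essentially the same route as the paper: both take the variety from the main construction, observe that the obstructing class generates a copy of $\Z/p\Z$ (the paper deduces the order from $\overline{\Br}\,X$ being $p$-torsion, you quote it directly from Theorem \ref{main}), and use that the only proper subgroup is trivial together with $X(\A_k)\neq\emptyset$ to match the definition of capturing. Your closing check that the obstruction is meant in the strong form $X(\A_k)^{\langle\alpha\rangle}=\emptyset$ is exactly what Theorem \ref{heart of the paper}(c) supplies, so there is no gap.
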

    
Recently, Liang and Liu have announced in the preprint \cite{ll} that any finite abelian group can capture the Brauer--Manin obstruction.


\subsection{Overview of the paper}
We begin Section~\ref{prelim} by recalling key concepts on Severi--Brauer varieties and the Brauer--Manin obstruction. In Section~\ref{pull back method}, we prove the pullback theorem for arbitrary varieties, with applications given in Section~\ref{applications}. In Section \ref{Brauer Group of Severi--Brauer fibration}, for any choice of a cyclic algebra over $k(t)$, we construct a Severi--Brauer fibration. Then we ensure that the Brauer group of such a variety is a non-trivial $p$-torsion group by base changing to a Henselian local ring and using the purity theorem.   

In Subsection~\ref{subgroups capturing BMO}, for each odd prime \(p\), we consider the cyclic \(p\)-algebra \((a,f)\), with \(a\) and \(f\) subject to suitable technical conditions, and the associated Severi--Brauer fibration. Using the pullback Theorem~\ref{pullback}, we exhibit a local-global failure captured by an element of order \(p\), thereby proving Corollary~\ref{sub of main}. Finally, in Subsection \ref{index one failing Hasse}, we construct a Severi--Brauer fibration using \Cref{SB fibration violating Hasse} which admits a zero-cycle of degree one with Brauer--Manin obstruction to rational points.

\subsection{Notations} Throughout the article, a variety is a separated scheme of finite type over a field. By a `nice' scheme, we mean a smooth projective geometrically integral scheme. For a number field $k$, let $\zeta_n$ denote a primitive $n$th root of unity. Let $\Omega_k$ denote the set of all places of $k$. For each place \(v\in\Omega_k\), we write \(k_v\) for the completion of \(k\) at \(v\). Let \(\mathbb{A}_k\) denote its ring of adeles. For each non-archimedean place \(v\), let \(\mathcal{O}_v\) denote the ring of integers of \(k_v\) and \(\mathfrak{m}_v\) its maximal ideal.

 For a variety $X$ over $k$, $X^s$ denotes the base change of $X$ to the separable closure of $k$. For any scheme \( X \), let \( X^{(1)} \) be the set of codimension 1 points. For \( t \in X \), we denote the residue field at \( t \) by \( \kappa(t) \). If \( X \) is integral, the function field of \( X \) is denoted by \( \kappa(X) \).

\section{Preliminaries} \label{prelim}

    \subsection{Brauer group of a field}(See \cite[Section 2.4]{gs} for details)
	For any field \( k \), let \(\Br k\) denote its Brauer group. $\Br k$ classifies finite central division algebras over $k$ upto similarity. Let $A$ be a central simple algebra over $k$. Then $A$ is said to be split over $k$ if the class of $A$ in $\Br k$ is zero. Equivalently, $A$ is split if and only if there exists an integer $m >0$ such that $A \simeq M_m(k)$ as $k$-algebras. \\
	Given a central simple algebra $A$ of degree $n$ over an arbitrary field $k$, there exists an associated projective $k$ variety $X$ of dimension $n - 1$ known as the Severi--Brauer variety associated with $A$.
	The existence of a $k$-point in $X$ is characterized as follows.
	
	\begin{lemma}\label{SB variety and rational point}
		Let $A$ be a central simple algebra of degree $n$ over a field $k$ and $X$ be the associated Severi--Brauer variety over $k$. The following are equivalent.
		\begin{enumerate}[label=\arabic*.]
			\item $X$ has $k$-rational point.
			\item $X$ is isomorphic to the projective space $\P^{n-1}_{k}$ over $k$.
			\item $A$ splits over $k$. 
		\end{enumerate}
		
	\end{lemma}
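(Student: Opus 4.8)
The plan is to establish the three equivalences through the cycle $(3)\Rightarrow(2)\Rightarrow(1)\Rightarrow(3)$. The implication $(2)\Rightarrow(1)$ is immediate, since $\P^{n-1}_k$ has $k$-points. For $(3)\Rightarrow(2)$ I would use the construction of $X=X_A$ as a twisted form of projective space: one has $X^s\cong\P^{n-1}_{k^s}$, and since $\mathrm{Aut}(\P^{n-1})=\mathrm{PGL}_n$, the $k$-isomorphism class of $X$ is recorded by a class in $H^1(k,\mathrm{PGL}_n)$, which is exactly the class classifying $A$ among central simple $k$-algebras of degree $n$; if $A$ is split this class is trivial, so $X\cong\P^{n-1}_k$. (Alternatively, using the description of $X_A$ as the variety of right ideals of reduced dimension $1$, one checks directly that $X_{M_n(k)}\cong\P^{n-1}_k$.)

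The substance is $(1)\Rightarrow(3)$. Here I would invoke the low-degree exact sequence arising from the Hochschild--Serre spectral sequence for $\mathbb{G}_m$ on $X$ (valid because $X$ is projective and geometrically integral, so $\overline{k}[X]^{\times}=\overline{k}^{\times}$):
\[
0 \longrightarrow \mathrm{Pic}\,X \longrightarrow (\mathrm{Pic}\,X^s)^{G_k} \xrightarrow{\delta} \Br\,k \longrightarrow \ker\bigl(\Br\,X \to \Br\,X^s\bigr).
\]
Since $X^s\cong\P^{n-1}_{k^s}$, the group $\mathrm{Pic}\,X^s$ is infinite cyclic with trivial $G_k$-action, generated by $\mathcal{O}(1)$, and the standard computation of the connecting maps attached to $1\to\mathbb{G}_m\to\mathrm{GL}_n\to\mathrm{PGL}_n\to1$ shows that $\delta(\mathcal{O}(1))$ generates the subgroup $\langle[A]\rangle\subseteq\Br\,k$; in particular $\delta(\mathcal{O}(1))=0$ if and only if $[A]=0$. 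Now a point $P\in X(k)$ furnishes a section of the structure morphism $X\to\operatorname{Spec}k$, hence a retraction of the pullback $\Br\,k\to\Br\,X$ given by evaluation at $P$; therefore this pullback is injective, $\delta=0$, and consequently $[A]=0$, which is $(3)$.

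If one wishes to avoid the spectral sequence, $(1)\Rightarrow(3)$ can instead be obtained from the right-ideal model of $X_A$: a $k$-point corresponds to a right ideal $I\subseteq A$ with $\dim_k I=n$, and left multiplication defines a $k$-algebra homomorphism $A\to\mathrm{End}_k(I)\cong M_n(k)$ which is injective because $A$ is simple, hence an isomorphism by the dimension count $\dim_k A=n^2$; thus $A$ is split. In either route the genuine content, and the step I expect to be the real obstacle, is the identification of $\delta(\mathcal{O}(1))$ with the class of $A$ (respectively, the bijection between $k$-points of $X_A$ and right ideals of dimension $n$) --- the precise dictionary between the variety and its algebra --- which I would set up carefully or quote from \cite{gs}; the remainder of the argument is bookkeeping.
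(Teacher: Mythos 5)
Your argument is correct in its main line, but it takes a genuinely different route from the paper. The paper disposes of the lemma by citation: the equivalence of (a) and (b) is quoted as part of Ch\^atelet's theorem, and (b)$\Leftrightarrow$(c) is deduced from the base-point-preserving bijection in \cite{gs} between degree-$n$ central simple algebras and $(n-1)$-dimensional Severi--Brauer varieties. You instead prove the cycle $(3)\Rightarrow(2)\Rightarrow(1)\Rightarrow(3)$: the implication $(3)\Rightarrow(2)$ via the twisted-forms dictionary $H^1(k,\mathrm{PGL}_n)$, and the substantive implication $(1)\Rightarrow(3)$ via the low-degree Hochschild--Serre sequence $0\to \mathrm{Pic}\,X\to(\mathrm{Pic}\,X^s)^{G_k}\to \Br\,k\to \Br\,X$, the identification of $\delta(\mathcal{O}(1))$ with (a generator of) $\langle[A]\rangle$, and the observation that a $k$-point splits $\Br\,k\to\Br\,X$, forcing $\delta=0$ and hence $[A]=0$. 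This is essentially a proof of the relevant part of Ch\^atelet's theorem rather than an appeal to it; what it buys is a self-contained argument using only facts the paper already invokes elsewhere (Hilbert 90 for exactness, triviality of the Galois action on $\mathrm{Pic}\,X^s\cong\Z$, and the computation of $\delta(\mathcal{O}(1))$, which you rightly flag as the one ingredient to quote from \cite{gs}). What the paper's citation buys is brevity and no reliance on the Brauer--Picard sequence at this early stage.

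One correction to your optional alternative for $(1)\Rightarrow(3)$: left multiplication does \emph{not} preserve a right ideal $I\subseteq A$ (e.g.\ in $M_2(k)$ take $I=e_{11}M_2(k)$ and multiply on the left by $e_{21}$), so the map you describe is not defined. The repair is standard: $I$ is a right $A$-module, so right multiplication gives an injective $k$-algebra homomorphism $A^{\mathrm{op}}\to \mathrm{End}_k(I)\cong M_n(k)$, which is an isomorphism by the dimension count $\dim_k A^{\mathrm{op}}=n^2$; then $A^{\mathrm{op}}$ split implies $A$ split since $[A]=-[A^{\mathrm{op}}]$ in $\Br\,k$. Since this route is presented only as an alternative, the slip does not affect the validity of your main argument.
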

	
	\begin{proof}
    Follows from \cite[Theorem 5.1.3]{gs} and the base-point preserving bijection in \cite[p. 143]{gs}.
	\end{proof}
	
	Cyclic algebras constitute an important subclass of central simple algebras. These are generalization of quaternion algebras to arbitrary degree. When the base field $k$ contains a primitive $m$-th root of unity $\zeta_m$, a degree $m$ cyclic algebra has a nice presentation. Given any $a,b \in k^\times$, there exists a cyclic $m$-algebra denoted by $(a,b)_{\zeta_m} $, which can be described as
	\begin{center}
		$ \langle x,y : x^m = a, y^m = b, xy=\zeta_m yx\rangle$.
	\end{center}
	
	We now present a useful result concerning cyclic algebras, whose proof is given in \cite[Corollary 4.7.8]{gs}.
	
    \begin{lemma}\label{split algebra} Let $k$ be a field containing a primitive $m$th root of unity $\zeta_m$ and let $a,b \in k^\times$. Then the following statements are equivalent.
    \begin{enumerate}[label=\arabic*.]
		\item The cyclic algebra $(a,b)_{\zeta_m}$ is split over $k$.
		\item The element $b$ is a norm from the field extension $k(\sqrt[m]{a})$.
		\item The element $a$ is a norm from the field extension $k(\sqrt[m]{b})$.
    \end{enumerate}
    \end{lemma}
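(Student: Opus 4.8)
The plan is to collapse the three-way equivalence to the single assertion
\[
(\star)\qquad (a,b)_\omega \text{ splits over } k \iff b \in N_{L/k}(L^\times),\qquad L := k(\sqrt[m]{a}),
\]
and then prove $(\star)$ by recognizing $(a,b)_\omega$ as a crossed-product (cyclic) algebra for $L/k$ and analyzing it through $\sigma$-semilinear maps. For the reduction, note that swapping the roles of the two generators in the presentation of $(b,a)_\omega$ exhibits a $k$-algebra isomorphism $(a,b)_\omega \cong (b,a)_\omega^{\mathrm{op}}$: in the opposite algebra the relation $uv = \omega vu$ reads $u\ast v = \omega^{-1}\,v\ast u$, which after renaming $v\mapsto x,\ u\mapsto y$ is exactly the presentation $x^m=a,\ y^m=b,\ xy=\omega yx$ of $(a,b)_\omega$. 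Since a central simple algebra is split iff its opposite is (e.g.\ $M_n(k)^{\mathrm{op}}\cong M_n(k)$), $(a,b)_\omega$ is split iff $(b,a)_\omega$ is split. Applying $(\star)$ to the pair $(a,b)$ yields (a)$\Leftrightarrow$(b); applying $(\star)$ to the pair $(b,a)$ yields ``$(b,a)_\omega$ split $\Leftrightarrow a\in N_{k(\sqrt[m]{b})/k}$'', which together with the previous sentence gives (a)$\Leftrightarrow$(c). Hence all three conditions are equivalent.

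To prove $(\star)$ I would first treat the generic case in which $T^m-a$ is irreducible over $k$, so $[L:k]=m$. Inside $(a,b)_\omega$ the subring $k[x]=k[T]/(T^m-a)\cong L$ is a maximal subfield, and conjugation by $y$ restricts on $L$ to the Kummer generator $\sigma\in\Gal(L/k)$, $\sqrt[m]{a}\mapsto\omega\sqrt[m]{a}$ (this is the relation $y^{-1}xy=\omega x$); with $y^m=b\in k^\times$ we obtain $(a,b)_\omega=\bigoplus_{i=0}^{m-1}Ly^i$ with $y\ell y^{-1}=\sigma(\ell)$, i.e.\ the cyclic crossed product $(L/k,\sigma,b)$, well known to be central simple of dimension $m^2$ over $k$ (see \cite{gs}). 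Now argue both directions using the one-dimensional $L$-vector space $L$ itself. If $b=N_{L/k}(c)$ for some $c\in L^\times$, then $\tau(v):=c\,\sigma(v)$ is a $\sigma$-semilinear automorphism of $L$ with $\tau^m=N_{L/k}(c)=b$, and sending $L$ to its multiplication operators and $y\mapsto\tau$ defines a nonzero $k$-algebra homomorphism $(L/k,\sigma,b)\to\mathrm{End}_k(L)\cong M_m(k)$, necessarily an isomorphism by simplicity and equality of $k$-dimensions; hence $(a,b)_\omega$ is split. Conversely, if $(a,b)_\omega\cong\mathrm{End}_k(V)$ with $\dim_k V=m$, then $L$ makes $V$ a one-dimensional $L$-space and $y$ acts as a $\sigma$-semilinear automorphism; writing $y(e)=ce$ on a basis $e$ gives $y^m(e)=N_{L/k}(c)\,e$, so $b=N_{L/k}(c)\in N_{L/k}(L^\times)$.

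The remaining, and genuinely fiddly, point is the degenerate case where $T^m-a$ is reducible: by Kummer theory $d:=[L:k]$ then properly divides $m$, one has $L=k(\sqrt[d]{a'})$ for a suitable $a'\in k^\times$, and (using that $T^m-a$ is separable since $\Char k\nmid m$) $k[x]\cong L^{\oplus m/d}$ splits off orthogonal idempotents in $(a,b)_\omega$, identifying $(a,b)_\omega\cong M_{m/d}\!\big((L/k,\sigma',b)\big)$ for a degree-$d$ cyclic algebra; since passing to matrix algebras preserves the property of being split and $N_{k(\sqrt[m]{a})/k}=N_{L/k}$ is unchanged, $(\star)$ reduces to the generic case (and when $a\in k^{\times m}$ this is the trivial case $L=k$, where both sides of $(\star)$ hold automatically). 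I expect the bookkeeping of the correct generator $\sigma'$ and root of unity in this reduction to be the main obstacle; it can be bypassed entirely by the cohomological route — identifying $(a,b)_\omega$ with the cup product $\delta(a)\cup\delta(b)\in H^2(k,\mu_m)\cong\Br(k)[m]$ under $\mu_m\cong\Z/m\Z$, observing that $\delta(a)$ corresponds to $L/k$ regardless of $[L:k]$ by Kummer theory, and invoking the standard isomorphism $\Br(L/k)\cong k^\times/N_{L/k}(L^\times)$ from Tate periodicity for the cyclic group $\Gal(L/k)$ — at the price of importing more Galois-cohomological machinery.
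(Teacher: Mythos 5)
Your proposal is correct in substance, but note that the paper does not prove this lemma at all --- it simply cites Gille--Szamuely \cite{gs} --- so any comparison is between your argument and the standard textbook treatment. What you write is essentially that treatment: the identification of $(a,b)_\omega$ with the cyclic crossed product $\bigoplus_i L y^i$ for $L=k(\sqrt[m]{a})$, the ``split iff $b\in N_{L/k}(L^\times)$'' criterion proved via $\sigma$-semilinear endomorphisms of the one-dimensional $L$-space, and (for the degenerate case) either the idempotent decomposition $k[x]\cong L^{\oplus m/d}$ giving $(a,b)_\omega\cong M_{m/d}\bigl((L/k,\sigma',b)\bigr)$, or the cohomological detour through the cup product and $\Br(L/k)\cong k^\times/N_{L/k}(L^\times)$, which is precisely the route taken in \cite{gs}. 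Your reduction of (a)$\Leftrightarrow$(c) to (a)$\Leftrightarrow$(b) via $(a,b)_\omega\cong (b,a)_\omega^{\mathrm{op}}$ is clean and correct. One small bookkeeping slip: with your convention $\sigma(\sqrt[m]{a})=\omega\sqrt[m]{a}$ and the relation $y^{-1}xy=\omega x$, conjugation by $y$ acts on $L$ as $\sigma^{-1}$, not $\sigma$; correspondingly, in the forward direction the assignment $x\mapsto M_{\sqrt[m]{a}}$, $y\mapsto \tau$ with $\tau(v)=c\,\sigma(v)$ satisfies $M_x\tau=\omega^{-1}\tau M_x$ rather than the required $M_x\tau=\omega\,\tau M_x$, so you must use the other generator of $\Gal(L/k)$ (or replace $\omega$ by $\omega^{-1}$). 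This is harmless --- the norm group, and hence the criterion, is independent of the choice of generator --- but it should be fixed for the homomorphism to be well defined. The genuine extra work in your self-contained route, as you anticipate, is the reducible case of $T^m-a$; your matrix-algebra reduction there is correct (the corner algebra $e_1Ae_1$ is generated by $Le_1$ and $y^{m/d}e_1$, with $(y^{m/d})^d=b$, and fullness of $e_1$ follows from simplicity of $A$), so the proof goes through, at the cost of more care than the citation the paper relies on.
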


\begin{theorem}[Amitsur]\label{Amitsur}
    Let $X$ be a Severi--Brauer variety defined over a field $k$. Then the kernel of the restriction map $r_X : \Br k \rightarrow \Br \kappa(X)$ is a cyclic group generated by the class of $X$ in $\Br k$.
\end{theorem}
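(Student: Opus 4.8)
The plan is to prove the two inclusions $\langle [X]\rangle\subseteq\ker r_X$ and $\ker r_X\subseteq\langle [X]\rangle$ separately. The first is immediate: the generic point of $X$ is a $\kappa(X)$-rational point of $X_{\kappa(X)}$, so by Lemma~\ref{SB variety and rational point} the central simple algebra $A$ representing the class $[X]$ splits over $\kappa(X)$, i.e.\ $r_X([X])=0$; since $\ker r_X$ is a subgroup of $\Br\,k$ it contains $\langle[X]\rangle$. All the work is in the reverse inclusion, and I would approach it through the Hochschild--Serre spectral sequence together with the computation $\operatorname{Pic}X^s\cong\Z$.

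First I would pass from $\kappa(X)$ to $X$ itself. Since $X$ is smooth, hence regular, and integral, the restriction $H^2_{\et}(X,\G_m)\to\Br\,\kappa(X)$ is injective (Grothendieck's theorem on the Brauer group of a regular integral scheme), so $\ker r_X$ coincides with the kernel of the pullback $\pi^*\colon\Br\,k\to H^2_{\et}(X,\G_m)$ along the structure morphism $\pi\colon X\to\operatorname{Spec}k$. Now apply the Hochschild--Serre spectral sequence $E_2^{p,q}=H^p(\Gal(k^s/k),H^q_{\et}(X^s,\G_m))\Rightarrow H^{p+q}_{\et}(X,\G_m)$. Since $X^s\cong\P^{n-1}_{k^s}$, one has $H^0_{\et}(X^s,\G_m)=(k^s)^\times$, $\operatorname{Pic}X^s\cong\Z$ with trivial Galois action (the ample generator of the Picard group of projective space is preserved by every automorphism of $X^s$), and $\Br\,X^s=\Br\,k^s=0$. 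Inserting this, together with Hilbert~90, into the five-term exact sequence yields
\[
0\to\operatorname{Pic}X\to\Z\xrightarrow{\ d_2\ }\Br\,k\xrightarrow{\ \pi^*\ }H^2_{\et}(X,\G_m),
\]
so $\ker\pi^*=d_2(\Z)$ is cyclic, generated by the image $d_2(\xi)$ of a generator $\xi$ of $\operatorname{Pic}X^s$. This already shows $\ker r_X$ is cyclic.

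It then remains to identify the generator, i.e.\ to show $d_2(\xi)=\pm[X]$. The point is that $X$ is the twisted form of $\P^{n-1}$ classified by a cocycle class in $H^1(\Gal(k^s/k),\mathrm{PGL}_n(k^s))$ whose image under the connecting map of $1\to\G_m\to\mathrm{GL}_n\to\mathrm{PGL}_n\to1$ is, by definition, the class $[A]=[X]$; on the other hand $d_2(\xi)$ is exactly the obstruction to descending the ample generator $\mathcal{O}(1)$ of $\operatorname{Pic}X^s$ to $X$, equivalently to lifting the semilinear Galois action on $X^s=\P((k^s)^n)$ to a semilinear action on $(k^s)^n$, and this obstruction is computed by the same connecting homomorphism. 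Hence $d_2(\xi)=\pm[X]$, the sign being immaterial, so $\ker r_X=d_2(\Z)=\langle[X]\rangle$; the bookkeeping for this identification can be organized as in \cite{gs}. (Equivalently, the easy inclusion already gives $[X]\in d_2(\Z)$, so it would suffice to know that the order of $d_2(\xi)$ equals the period of $A$, i.e.\ that $\operatorname{Pic}X=\operatorname{per}(A)\cdot\Z$ inside $\Z$, but this is essentially the same input.)

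The step I expect to be the main obstacle is precisely this last identification of $d_2(\xi)$. Everything preceding it is formal manipulation of the spectral sequence, whereas matching the differential $d_2$ with the boundary map of $1\to\G_m\to\mathrm{GL}_n\to\mathrm{PGL}_n\to1$ is where the geometry of Severi--Brauer varieties actually enters; in particular one must be careful that ``$\mathcal{O}(1)$ on $X^s$'' denotes the ample generator of $\operatorname{Pic}X^s\cong\Z$ and not the restriction of a hyperplane class from some chosen projective embedding of $X$.
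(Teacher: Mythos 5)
This statement is Amitsur's theorem, which the paper states as background in Section~\ref{prelim} and does not prove (it is the classical result, for which \cite{gs} is the implicit reference), so there is no in-paper proof to compare against; judged on its own, your argument is a correct outline of the standard proof and would be accepted as such. The easy inclusion via the generic point and Lemma~\ref{SB variety and rational point} is fine, as is the reduction to $\ker(\Br\,k\to H^2_{\et}(X,\G_m))$ using injectivity of $\Br\,X\to\Br\,\kappa(X)$ for the regular integral scheme $X$, and the five-term Hochschild--Serre sequence with $H^1(\Gal(k^s/k),(k^s)^\times)=0$, $\operatorname{Pic}X^s\cong\Z$ with trivial action, giving $\ker\pi^*=d_2(\Z)$ cyclic. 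The one place where your write-up is not yet a proof is exactly the step you flag: the identification $d_2(\xi)=\pm[X]$, i.e.\ matching the spectral-sequence differential with the descent obstruction for $\mathcal{O}(1)$ and then with the boundary map of $1\to\G_m\to\mathrm{GL}_n\to\mathrm{PGL}_n\to 1$ applied to the $1$-cocycle presenting $X$ as a twist of $\P^{n-1}$. That identification is the genuine content of Amitsur's theorem (everything before it is formal), and in your proposal it is asserted and deferred to \cite{gs} rather than carried out; in Gille--Szamuely it is done by an explicit computation with the tautological bundle/Lichtenbaum's exact sequence. So: right route, correct skeleton, but if a self-contained proof is intended you must supply that computation (or, per your parenthetical, the equivalent statement $\operatorname{Pic}X=\operatorname{per}(A)\Z$, which requires the same input); citing it is perfectly reasonable here since the paper itself treats the theorem as known.
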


The definition of a Brauer group can be generalized to a scheme using \'{e}tale cohomology.

\subsection{Brauer group of a scheme}
For any scheme $X$ over a field $k$, define the Brauer group as $\Br X:=H^2_{\et}(X,\G_m)$, and its subgroup of constant classes as $\Br_0X:=\im(\Br k\rightarrow \Br X)$. We write $\overline{\Br}X$ for the quotient $\Br X/\Br_0 X$.

Let $L$ be a $k$-algebra and $x \in X(L)$. Then the morphism $\text{Spec}\, L \xrightarrow{x} X$ induces, by functoriality of the Brauer group, a homomorphism $\Br X \to \Br L$. For $A \in \Br X$, its image under this homomorphism is called the \emph{evaluation} of $A$ at $x$, and is denoted by $A(x)$ or $x^*A$. The inclusion of the generic point $i: \text{Spec}\,\kappa(X) \to X$ induces a homomorphism $i^*: \Br X \to \Br \kappa(X)$. Finally, the Brauer group is a \emph{birational invariant} for smooth projective varieties. For more details, see \cite[§ 6.6]{poonen}.

 The following theorem helps us decide when an element of $\Br\kappa(X)$ actually belongs to $\Br X$. For a reference, see \cite[Theorem 6.8.3]{poonen}. This is a version of the Grothendieck's Purity theorem for the Brauer group.

\begin{theorem}[Purity theorem]
    Let $X$ be a regular integral Noetherian scheme. Then the following sequence 
    \begin{equation*}\label{purity1}
    0\rightarrow\Br X\rightarrow\Br \kappa(X)\xrightarrow{\oplus_x \partial_x}\displaystyle\bigoplus_{x\in X^{(1)}}H^1(\kappa(x),\Q/\Z)
	\end{equation*}
	is exact.
\end{theorem}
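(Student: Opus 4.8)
The plan is to reduce the global exactness to a purely local purity statement at each codimension-one point, together with a separate extension argument across codimension $\geq 2$. Regularity of $X$ is used throughout: for every $x \in X^{(1)}$ the local ring $\mathcal{O}_{X,x}$ is a discrete valuation ring with fraction field $\kappa(X)$ and residue field $\kappa(x)$, which is exactly what is needed to define each residue map $\partial_x$.

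First I would treat the one-dimensional local case. For a discrete valuation ring $R$ with fraction field $K$ and residue field $\kappa$, I would construct the residue homomorphism $\partial : \Br K \to H^1(\kappa,\Q/\Z)$ and prove left-exactness of
\[
0 \to \Br R \to \Br K \xrightarrow{\ \partial\ } H^1(\kappa,\Q/\Z),
\]
so that $\ker \partial$ is precisely the unramified subgroup $\Br R$. The map $\partial$ can be obtained either from the filtration of $\Br K$ by ramification (identifying $H^1(\kappa,\Q/\Z)$ with continuous characters of $\Gal(\kappa^{\mathrm{sep}}/\kappa)$) or from the coniveau spectral sequence on $\mathrm{Spec}\, R$; this is Grothendieck's local purity for the Brauer group. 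Taking $R = \mathcal{O}_{X,x}$ for each $x \in X^{(1)}$ then produces the maps $\partial_x$ appearing in the statement.

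Next I would establish injectivity of $\Br X \to \Br \kappa(X)$, the exactness at the two left terms. The key input is the Auslander–Goldman theorem: for a regular (more generally, integrally closed Noetherian) local domain $R$ with fraction field $K$, the map $\Br R \to \Br K$ is injective. Since $X$ is regular and integral, a class in $\Br X = H^2_{\et}(X,\G_m)$ that dies in $\Br \kappa(X)$ already dies on a dense open set, and then, by spreading out and the local injectivity at each point, dies on all of $X$; hence $\Br X \hookrightarrow \Br \kappa(X)$.

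It remains to prove exactness in the middle, that is $\ker(\oplus_x \partial_x) = \mathrm{im}(\Br X)$. The inclusion $\mathrm{im}(\Br X) \subseteq \ker(\oplus_x \partial_x)$ is immediate: a class restricted from $\Br X$ lies in $\Br \mathcal{O}_{X,x}$ for every $x \in X^{(1)}$, so its residue vanishes by the local purity above. For the reverse inclusion, an $\alpha \in \Br \kappa(X)$ with $\partial_x(\alpha)=0$ for all $x\in X^{(1)}$ lies, again by local purity, in $\bigcap_{x \in X^{(1)}} \Br \mathcal{O}_{X,x}$. The crux is then to show this intersection coincides with $\Br X$, equivalently that a Brauer class defined over the complement of a closed subset of codimension $\geq 2$ extends across it. This codimension $\geq 2$ extension is the substantive content of purity and is the main obstacle; I would deduce it from Grothendieck's absolute cohomological purity, established in general by Gabber, or, when $X$ is a smooth variety over a field, from the Gersten resolution and Bloch–Ogus exactness for the \'{e}tale sheaf $\G_m$. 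Combining the three steps yields the asserted exact sequence.
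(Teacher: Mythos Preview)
The paper does not supply a proof of this theorem: it is quoted as a known input with a reference to \cite[Theorem~6.8.3]{poonen}, so there is no in-paper argument to compare against. Your outline is the standard route to purity for the Brauer group---reduce to the DVR case to build the residues $\partial_x$, use injectivity of $\Br X \to \Br\,\kappa(X)$ for regular schemes, and then invoke absolute cohomological purity (Gabber/\v{C}esnavi\v{c}ius) to extend across codimension~$\geq 2$---and is essentially correct as a sketch. One small caution: the codimension~$\geq 2$ extension in the stated generality (arbitrary regular integral Noetherian scheme, no excellence hypothesis) is a genuinely deep result that was only established in full by \v{C}esnavi\v{c}ius building on Gabber; citing ``Grothendieck's absolute cohomological purity'' alone understates what is needed here, since Grothendieck proved only special cases.
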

In particular, if we choose $X$ to be $\P^1_k$ and a cyclic algebra $(a,g)$ of index $p$ in $\Br\kappa(\P^1_k)$, then for closed points of $t\in{\P^{1}_k}$, $\partial_t(a,g)=a^{v_t(g)}$ in $\kappa(t)^{\times}/\kappa(t)^{\times p}\subset H^1(\kappa(t),\Q/\Z)$.

\begin{theorem}
   Let $X$ be a regular integral scheme and $U\subset X$ be a dense open subscheme. Then we have an exact sequence
     \begin{equation*}\label{purity2}
    0\rightarrow\Br X\rightarrow\Br U\xrightarrow\displaystyle\bigoplus_{D}H^1(\kappa(D),\Q/\Z)
	\end{equation*}
    where $D$ ranges over the irreducible divisors of $X$ with support in $X\setminus U$ and $\kappa(D)$ denotes the residue field at the generic point of $D$.
\end{theorem}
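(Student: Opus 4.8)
The plan is to deduce this relative sequence from the absolute Purity theorem stated above, applied simultaneously to $X$ and to $U$. First I would record that $U$, being a nonempty open subscheme of the regular integral Noetherian scheme $X$, is again regular, integral and Noetherian, and that density of $U$ in the integral scheme $X$ forces $\kappa(U)=\kappa(X)=:K$. The absolute Purity theorem then supplies two exact sequences
\[ 0\to \Br X\to \Br K \xrightarrow{\oplus_x\partial_x} \bigoplus_{x\in X^{(1)}} H^1(\kappa(x),\Q/\Z), \]
\[ 0\to \Br U\to \Br K \xrightarrow{\oplus_x\partial_x} \bigoplus_{x\in U^{(1)}} H^1(\kappa(x),\Q/\Z), \]
which exhibit $\Br X$ and $\Br U$ as the subgroups of $\Br K$ killed, respectively, by all residues along codimension-one points of $X$ and of $U$.

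Next I would identify the two index sets. Since $U$ is open, $U^{(1)}\subseteq X^{(1)}$, and a point $x\in X^{(1)}$ fails to lie in $U$ exactly when $\overline{\{x\}}$ is an irreducible divisor $D$ of $X$ whose support is contained in the closed set $X\setminus U$; here one uses that a nonempty open subset of an irreducible space contains the generic point, so that $\eta_D\notin U$ is equivalent to $D\cap U=\emptyset$. Conversely the generic point of each such $D$ lies in $X^{(1)}\setminus U^{(1)}$, and irreducible components of $X\setminus U$ of codimension $\ge 2$ contribute no codimension-one points and are therefore irrelevant. This sets up a bijection between $X^{(1)}\setminus U^{(1)}$ and the divisors $D$ indexing the target of the asserted map. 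I would also note that for $x\in U^{(1)}$ the residue map $\partial_x$ depends only on the discrete valuation ring $\sO_{X,x}=\sO_{U,x}$ (a DVR because $X$ is regular in codimension one), hence is literally the same map whether read off from $X$ or from $U$.

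Finally, the theorem follows by a short diagram chase. The restriction $\Br X\to\Br U$ is compatible with the two inclusions into $\Br K$ by functoriality of the Brauer group along $\operatorname{Spec}K\to U\to X$, so it is the inclusion of one subgroup of $\Br K$ into another and is in particular injective. The map in the statement sends $\alpha\in\Br U$ to the family $(\partial_D\alpha)_D$, that is, it is $\Br U\hookrightarrow\Br K$ followed by projection onto the coordinates indexed by $x\in X^{(1)}\setminus U^{(1)}$; since any $\alpha\in\Br U$ already kills all residues over $U^{(1)}$, it lies in the kernel of this map if and only if it kills every residue over all of $X^{(1)}$, which by absolute purity for $X$ means precisely that $\alpha\in\Br X$. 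Hence $\ker\!\big(\Br U\to\bigoplus_D H^1(\kappa(D),\Q/\Z)\big)=\Br X$, which is the claimed exactness. The only step requiring real care is the bookkeeping of the extra index set together with the locality (intrinsicity to $\sO_{X,x}$) of the residue maps; once those are in place the rest of the argument is purely formal, and a routine limit argument would even remove the Noetherian hypothesis, though that is not needed for the applications in this paper.
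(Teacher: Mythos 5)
Your proposal is correct, but note that the paper does not actually prove this statement: it is quoted as a known form of Grothendieck purity (the standard reference being Colliot-Th\'el\`ene--Skorobogatov's book), so there is no internal proof to compare against; what you have done is supply a derivation of the relative statement from the absolute purity sequence stated just before it, and that derivation is sound. The key points are all in place: $U$ inherits regular, integral, Noetherian from $X$ and shares the function field $K=\kappa(X)$; $U^{(1)}=X^{(1)}\cap U$ because local rings of points of $U$ coincide with those on $X$, so the residue maps $\partial_x$ for $x\in U^{(1)}$ are literally the same for $X$ and for $U$; and $X^{(1)}\setminus U^{(1)}$ is in bijection with the irreducible divisors $D\subset X$ supported in $X\setminus U$ (your remark that $\eta_D\in U$ iff $D\cap U\neq\emptyset$ is exactly the needed point, and components of $X\setminus U$ of codimension $\ge 2$ indeed contribute nothing). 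With both $\Br X$ and $\Br U$ realized inside $\Br K$ as the classes unramified along $X^{(1)}$, resp.\ $U^{(1)}$, the identification $\ker\bigl(\Br U\to\bigoplus_D H^1(\kappa(D),\Q/\Z)\bigr)=\Br X$ is the formal chase you describe, and the target is a genuine direct sum because residues of a fixed class vanish at all but finitely many $x\in X^{(1)}$ by the absolute sequence. Two cosmetic caveats: the relative statement as printed omits the Noetherian hypothesis that your argument (via the absolute theorem) uses, which you rightly flag as removable by a limit argument but should be stated as a hypothesis if one relies only on the quoted absolute purity; and the unlabeled arrow in the statement must be read as the family of residue maps $(\partial_D)_D$, as you do, for the exactness claim to even make sense.
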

    Now we recall some terminology related to Brauer--Manin obstruction.
\subsection{Brauer--Manin obstruction}
Let $X$ be a variety over a number field $k$. One can define the Brauer--Manin pairing \( \langle \cdot, \cdot \rangle : X(\mathbb{A}_k) \times \Br X \rightarrow \Q/\Z \) by summing the local pairings
\(\big((P_v), A\big) \mapsto \sum_v \operatorname{inv}_v \big(A(P_v)\big)\). For $A \in \Br X$, we have the following commutative diagram:
\[
\begin{tikzcd}
& X(k) \arrow[r, hook] \arrow[d,"A"] & X(\A_k) \arrow[d,"A"] & & \\
0 \arrow[r] & \Br k \arrow[r] & \bigoplus_{v} \Br k_{v} \arrow[r, "\sum \operatorname{inv}_{v}"] & \Q/\Z \arrow[r] & 0
\end{tikzcd}
\]
and we define the subset of adelic points \[X(\A_k)^A := \left\{ (x_v) \in X(\A_k) : \sum_v \operatorname{inv}_v A(x_v) = 0 \right\}.\]
For any subgroup $B \subset \Br X$, set \( X(\A_k)^B := \bigcap_{A \in B} X(\A_k)^A\). The above diagram shows that \(X(k)\subset X(\A_k)^{\Br} \subset X(\A_k)\). We say that there is a \emph{Brauer--Manin obstruction} to the existence of rational points on $X$ if $X(\A_k) \neq \emptyset$ but $X(\A_k)^{\Br} = \emptyset$.

 For a regular projective variety $X$, the continuity of Brauer--Manin pairing ensures that if $X$ has Brauer--Manin obstruction then there exists a finite subgroup $B \subset \Br X$ that captures the obstruction.
Let $B$ be such a finite subgroup, and write $\hat{B} := \operatorname{Hom}(B, \Q/\Z)$. For each place $v$, we obtain a map
\[
\phi_v : X(k_v) \to \hat{B},\] 
given by $P_v \mapsto \big(A \mapsto \operatorname{inv}_v A(P_v)\big).$
These maps sum to give a single map
\[
\phi : X(\A_k) \rightarrow \hat{B}.
\]
Moreover, for all but finitely many places $v$, one has $\phi_v\big(X(k_v)\big) = \{0\}$ \cite[Proposition ~13.3.1]{BG}, which ensures that the sum is well defined.

For each place $v$, the set $X(k_v)$ has a natural topology induced by that of $k_v$.  
For any subset $S \subset \Omega_k$, we equip $\prod_{v \in S} X(k_v)$ with the product topology. We say that a variety $X$ over $k$ satisfies \emph{weak approximation} if, for every finite subset $S \subset \Omega_k$, the image of the diagonal embedding
\[
X(k) \rightarrow \prod_{v \in S} X(k_v)
\]
is dense.

Let $\overline{X(k)}$ denote the closure of $X(k)$ in the adelic space $X(\mathbb{A}_k)$.  
By \cite[Corollary 8.2.11]{poonen}, one has $\overline{X(k)} \subset X(\mathbb{A}_k)^{\Br}$.

If $X$ is proper and $X(\mathbb{A}_k)^{\Br} \neq X(\mathbb{A}_k)$, then weak approximation fails for $X$. In this case, we say that there is a \emph{Brauer--Manin obstruction to weak approximation} on $X$. More precisely, for $\alpha \in \Br X$, let $\langle \alpha \rangle$ denote the subgroup of $\Br X$ generated by $\alpha$. If $X(\mathbb{A}_k)^{\langle \alpha \rangle} \neq X(\mathbb{A}_k)$, we say that $\alpha$ gives a Brauer--Manin obstruction to weak approximation on $X$. 

\section{Pull back method}\label{pull back method}

 In this section we show how from a given a variety with a global point we construct a variety that violates the Hasse principle by pulling back along a carefully chosen polynomial.
 By using the pullback method, we can modify our variety in a way that allows us to control the images of the Brauer--Manin pairing at finitely many places. This, in turn, enables us to control which subgroup of the Brauer group captures the obstruction.
	
\begin{proof}[{Proof of Theorem \ref{pullback}}]
     For a place \(v\) of \(k\), let $\phi_v: X(k_v) \to \Hom(\langle \alpha \rangle, \Q/\Z)$ denote the map induced by the Brauer--Manin pairing. By \cite[Prop 13.3.1]{BG}, the image of \(\phi_v\) is zero for almost all places $v$ of $k$. By assumption, there exists a point \(P \in X(k)\) such that \(\alpha(P)=0\). Since a \(k\)-rational point gives rise to a \(k_v\)-rational point for every place \(v\) of \(k\), it follows that \(0\) lies in the image of \(\phi_v\) for all \(v\). \\
     Let $v_i$, for  $1 \leq i \leq r$, denote the non-archimedean places  at which the map $\phi_v $ is non-constant. Since \(\alpha\) obstructs weak approximation \(X\), the set \(X(\mathbb{A}_k)^{\alpha}\) is a proper subset of \(X(\A_k)\). Thus there exists atleast one \(i\) with \(1 \leq i \leq r\) and a point \(P_{v_i} \in X(k_{v_i})\) such that \(\alpha(P_{v_i})\neq 0\). In particular, there exists a non-archimedean place, say \(v_1\), for which \(\im(\phi_{v_1}) \neq \{0\}\).
     
     Fix a non-zero element \(s \in \im(\phi_{v_1})\). Let $\mathcal{U}_{v_1}= \pi(\phi_{v_1}^{-1}(\{s\}))$ and for $2 \leq i \leq r$, let $\mathcal{U}_{v_i} := \pi(\phi_{v_i}^{-1}(\{0\}))$. Each \( \mathcal{U}_{v_i}\) is non-empty open subset of $\P^1(k_{v_i})$. Using weak approximation on \(\P^1\), we can choose a \( k \)-point $c$ in \(\mathbb{P}^1\) that lies in \( \mathcal{U}_{v_i} \) for every \( i \) and has geometrically irreducible fiber. We may assume that \( c = \infty \) after making an appropriate change of coordinates on \(\mathbb{P}^1\). This ensures that the fiber \( X_\infty(k_{v_i}) \neq \emptyset \) for all \( 1 \leq i \leq r \). 

     Now we consider the following conditions on a polynomial $g(t) \in k[t]$:
\begin{enumerate}[label=\arabic*.]
    
      \item For each place \(v \) such that \( {X}_{\infty}(k_v) = \emptyset \), the set \( g(\P^1(k_v)) \) meets \( {\pi}({X}(k_v)) \). 
    \item For each \(1 \leq i \leq r \), we have \( g(\P^1(k_{v_i})) \subset \mathcal{U}_{v_i} \).
    
\end{enumerate}
For each place \(v\) there exist local polynomials satisfying each of the above open conditions, by \cite[Lemma~4.3]{berg}. Hence, by weak approximation, we may choose a polynomial \( g \in k[t] \) that satisfies both of them simultaneously. Let \(\pi_g: X_g \longrightarrow \mathbb{P}^1\) denote the pullback of the morphism \( X \to \mathbb{P}^1 \) along \( g: \mathbb{P}^1 \to \mathbb{P}^1 \).\\
Let \( \tilde{g}: X_g \to X \) be the morphism induced by the polynomial \( g \), and set \(\gamma = \tilde{g}^*\alpha \in \Br X_g\) to be the pullback of \(\alpha\). For each place \( v \) of \( k \), let  
\(
\tilde{\phi}_v: X_g(k_v) \longrightarrow \Hom(\langle \gamma \rangle, \Q/\Z)
\)
denote the corresponding evaluation map for \( X_g \). Then the composition \(\phi_v \circ \tilde{g}\) factors through \(\tilde{\phi}_v\) and the natural inclusion  
\(\Hom(\langle \gamma \rangle, \Q/\Z) \hookrightarrow \Hom(\langle \alpha \rangle, \Q/\Z)\).\\
Condition (1) guarantees that $X_g(k_v) \neq \emptyset$ for every place $v$ where $X_\infty(k_v) = \emptyset$. Hence, we conclude that $X_g(\mathbb{A}_k) \neq \emptyset$. Condition (2) guarantees that the image of \( \tilde{\phi}_{v_1} \) is \(\{s\}\), and for each \( 2 \leq i \leq r \), the image of \( \tilde{\phi}_{v_i} \) is exactly \(\{0\}\). Moreover, for places \( v \notin \{v_1, \dots, v_r\} \), we have \(\im(\phi_{v}) = \{0\}\), which implies \(\im(\tilde{\phi}_{v}) = \{0\}\). Thus the only nontrivial image occurs at \( v_1 \). Consequently, \(0\) does not lie in the image of the map \( \tilde{\phi}: X_g(\mathbb{A}_k) \rightarrow \Hom(\langle \gamma \rangle, \Q/\Z)\) and we conclude that  
\(X_g(\mathbb{A}_k)^{\gamma} = \emptyset\). Hence, \( X_g \) admits a Brauer--Manin obstruction to the existence of rational points.

\end{proof}
\section{Brauer groups of Severi--Brauer fibrations}\label{Brauer Group of Severi--Brauer fibration}

In this section, we describe the construction of a Severi--Brauer fibration based on the choice of a generic fiber and explore its associated Brauer group. Since the Brauer group is a birational invariant for smooth projective varieties, one would want to compute the Brauer group of a Severi--Brauer fibration using only its generic fiber. This work provides an approach to do so. Additionally, since our construction does not rely on explicit polynomial equations, the techniques presented can be applied to a wide range of varieties.
	
	From now on, \( k \) is a number field that contains a primitive \(p\)th root of unity for a fixed odd prime $p$. Given an element $a \in k^\times$ and a polynomial $f(u) \in k[u]$, we describe the construction of a nice variety over $k$. Note that, there is a one-one correspondence between homogenous \( F \in k[u_0, u_1] \) whose degrees are a multiple of $p$ and $p$th power-free polynomials \( f \in k[u] \) given by
	\begin{center}
		\(
		F(u_0, u_1) \mapsto F(u, 1)
		\)
		and 
		\(
		f(u) \mapsto f\left(\frac{u_0}{u_1}\right) u_1^{p\lceil \deg f / p \rceil}.
		\)
	\end{center}
	
	\begin{construction}\label{construction}
		Consider the cyclic algebra \( (a, f) \in \Br(k(u)) \) of degree $p$. By the purity theorem \ref{purity1}, there exists an open set  U $\subset \mathbb{P}^1_k $ such that \( (a, f) \in \Br U \). Since the cyclic algebra $(a, f)$ is unramified at all closed points of $\P^{1}_{k}$ except possibly at the irreducible factors of $F$, we can take \( U = \mathbb{P}^1_k \setminus V(F) \) where $F$ is the homogeneous polynomial corresponding to $f$. Let \( X_U \) represent the family of Severi--Brauer varieties over \( U \) associated with the algebra \((a, f)\). Note that for each $q \in U$, the fiber $X_q \to \text{Spec}\, \kappa(q)$ is the Severi--Brauer variety associated with the cyclic algebra $(a, f(q)) \in \Br \kappa(q)$. Using Hironaka's theorem over \( k \), we choose a smooth projective model for \( X_U \to U \) denoted by \( \pi: X_{a,f} \to \mathbb{P}^1_k \). We like to emphasize that its generic fiber corresponds to the Severi--Brauer variety associated with \((a, f)\), which uniquely determines \( \pi: X_{a,f} \to \mathbb{P}^1_k \) up to birationality. Since a Severi--Brauer variety is geometrically integral rationally connected, we observe that $X_{a,f}$ is a nice rationally connected variety over $k$.
	\end{construction}

\begin{remark}
    The vertical Brauer group of \( X/Y \) is defined as
\[
\Br_{\text{vert}}(X/Y) = \{ A \in \Br X \mid j^*(A) \in \text{Im}[\Br(\text{Spec}\kappa(\eta)) \to \Br X_\eta] \},
\]
where $j$ denotes the natural inclusion of the generic fiber $X_{\eta} \to X.$ For families of Severi--Brauer varieties, the full Brauer group is vertical, that is, $\Br_{\text{vert}}(X)= \Br X$ \cite[Remark 11.1.12]{BG}. In particular for \( \pi: X_{a,f} \to \mathbb{P}^1_k \), $\Br (X_{a,f}) = \{ \pi^{*}(\beta) \in \Br X: \beta \in \Br k(t) \}$.
    \end{remark}

The following theorem examines some properties of the Brauer group of $X_{a,f}$.
	
	\begin{theorem}\label{p-torsion  brauer group}
		Let \( f = f_1 \dots f_r \) with \( r \geq 2 \), where each \( f_i \) is a distinct irreducible separable polynomial, and \( a \) is not a \( p \)th power in \( k[u]/\langle f_i(u) \rangle \) for any \( i \). Let \( F \in k[u_0, u_1] \) be the homogeneous polynomial corresponding to $f$. Consider the variety \( \pi: X \to \mathbb{P}^1_k \), where \( X = X_{a,f} \). We have
		\begin{enumerate}[label=\arabic*.]
			\item \label{injective} 
			$\Br k\rightarrow \Br X$ is injective. 
			\item \label{p-torsion Br}
			$\overline{\Br}X$ is $p$-torsion.
			\item \label{unramified brauer}
			Under the map \( \pi^*: \Br \kappa(\mathbb{P}^1_k) \rightarrow \Br \kappa(X) \), \( (a, f_i) \) lands in \( \Br X \) for all $i$.
			
			\item\label{nonzero Br}
			\( \pi^*(a,f_i) \) is nonzero in \( \overline{\Br}X \) for all $i$.
			
		\end{enumerate}
	\end{theorem}
    
	\begin{proof}
		$(1)$ The claim is true for conic bundles by \cite[Lemma 11.3.3]{BG}. We are providing a proof for Severi--Brauer fibration. Let $X_{\eta}$ be the generic fiber of \( \pi: X \to \mathbb{P}^1_k \). We know that the kernel of the map \( \Br k \rightarrow \Br X \hookrightarrow \Br \kappa(X) \) is equal to the kernel of the composition \( \Br k \hookrightarrow \Br k(t) \rightarrow \Br X_{\eta} \hookrightarrow \Br \kappa(X) \). Since the kernel of the map \( \Br k(t) \to \Br X_{\eta} \) is \( \langle [X_{\eta}] = [(a,f)] \rangle \)  by Theorem \ref{Amitsur} and \( \langle X_{\eta} \rangle \cap \Br k = \{0\} \), it follows that the restriction \( \Br k \to \Br X_{\eta} \) is injective. Therefore $\Br k \to \Br X$ is injective.
		
		$(2)$ This part of the proof follows a similar approach to that in \cite[Theorem 6.5.(a)]{berg}, where the case of conic bundles was addressed. We generalize their proof to Severi--Brauer fibrations arising from cyclic algebras and describe the necessary details.
		
		Let $L$ be a field extension of $k$ such that $a \in L^{\times p}$. The scheme $X_L$ is birational to \( \mathbb{P}^{p-1}_L \times \mathbb{P}^1_L \), which implies \( \Br X_L \cong \Br L \). Now let  $L=k(\sqrt[p]a)$ which is a cyclic Galois extension of \( k \) of degree \( p \) with Galois group $G$. Using the lower term exact sequence obtained using Hochschild–Serre spectral sequence we have $\overline{\Br}X \cong H^1_{et}(G, \text{Pic}\, X_L)$, which is annihilated by \(|G| = p\). This shows that \(\overline{\Br}X\) is \(p\)-torsion.

        $(3)$ Without loss of generality, fix an irreducible factor $f_i$ of $f$. We show that $\pi^{*}(a, f_i)$ lands in $\Br X$. Let $R$ be the Henselization of $k[t]$ with respect to  $f_i$ and $K$ be the fraction field of $R$. Let \( \pi_R: X_R \rightarrow \text{Spec}\,R \) be the structure morphism associated with the base change of our scheme $X$ to $\text{Spec}\,R$ with the generic fiber \( \tilde{\pi}_R: X_{\eta} \rightarrow \text{Spec}\, K \). By Amitsur's Theorem \ref{Amitsur} we have,  $\tilde{\pi}_R^{*}(a,f)=0$ in $\Br X_{\eta}$. Since $\Br \kappa(X_R)\xhookrightarrow{}\Br \kappa(X_{\eta})$, we get $\pi_R^{*}(a,f)=0$ in $\Br \kappa(X_{R})$. 
	
        Let $\hat{f_i}=f_i/f$, then we have $\pi_R^{*}(a,f_i)=\pi_R^{*}(a,f)+\pi_R^{*}(a,\hat{f_i})=\pi_R^{*}(a,\hat{f_i})$ in $\Br \kappa(X_R)$. Note that for $j\neq i$, $f_j$ is a unit in $R$ implies that \( \partial_{f_i}(a, \hat{f_i}) = a^{v_{f_i}(\hat{f_i})}=1 \). That is, \( (a, \hat{f_i}) \) lies in the kernel of the residue map \( \partial_{f_i}\), which implies that \( \pi_R^{*}(a, \hat{f_i}) \) is in \( \Br X_R \) by the purity theorem \ref{purity1}. Thus we can conclude that \( \pi_R^{*}(a, f_i) \) is also in \( \Br X_R \). Let $U=\pi^{-1}(f_i)$ and $V=\pi_R^{-1}(f_i)$, then by
        using the purity theorem \ref{purity2}, we have the following commutative diagram
		$$\begin{tikzcd}
			0 \arrow[r] & {\Br X} \arrow[rr] \arrow[d] &  & {\Br (X\setminus U)} \arrow[d] \arrow[rr] &  & {\bigoplus}_D{H^1(\kappa(D),\Q/\Z)} \arrow[d] \\
			0 \arrow[r] & {\Br X_R} \arrow[rr]         &  & {\Br (X_R\setminus V)} \arrow[rr]       &  & {\bigoplus}_{D'}{H^1(\kappa(D') ,\Q/\Z)}.          
		\end{tikzcd}$$
		
		Since the last vertical map is an isomorphism, we get $\pi^{*}(a, f_i) \in \Br X $. 
		
		$(4)$ We prove this result by contradiction. Assume \( \pi^{*}(a,f_i) = 0 \) in \( \overline{\Br}\kappa(X) \) for some $i$. Let $\pi_{\eta}:X_{\eta}\rightarrow\text{Spec}\,k(t)$ be the generic fiber of $\pi$. By assumption, we have \( \pi_{\eta}^{*}(a,f_i) = 0 \) in \( \overline{\Br}\kappa(X_{\eta}) \). By Amitsur's theorem \ref{Amitsur} we obtain the exact sequence
		\[
		1 \rightarrow \overline{\langle (a,f) \rangle} \rightarrow \overline{\Br} \kappa(\mathbb{P}^1_k) \xrightarrow{\pi_{\eta}^{*}} \overline{\Br}\kappa(X_{\eta}).
		\]
		
		Since \( \pi_{\eta}^{*}(a,f_i) = 0 \) in \( \overline{\Br}\kappa(X_{\eta})\), we have $(a,f_i)\in \text{Ker}\,\pi_{\eta}^*$. This implies $A:=[(a,f^n)]-[(a,f_i)]\in \Br k$ for some $n$. If $n$ is coprime to $p$ let $j \neq i$ be an element of $\{1, \dots, r\}$, then we have $\partial_{f_j}(A)=a^n$. As $a$ is not $p$th power in $\kappa(f_j)$, $a^n$ is not a $p$th power in $\kappa(f_j)$, thus $A\notin\Br k$. If $n$ is multiple of $p$ we consider $\partial_{f_i}(A)=a^{-1}$. Since $a$ is not $p$th power in $\kappa(f_i)$, we again have $A\notin\Br k$. In both cases, we obtain a contradiction to the implication that \( A\in \Br k \), which follows from the initial assumption. Hence our initial assumption is wrong and we conclude that $\pi^{*}(a,f_i)$ is a non-trivial element in $\overline{\Br}\kappa(X)$.
	\end{proof}
    Note that the construction of $X_{a,f}$ depends only on the generic fibre. Thus for any given central simple algebra $\alpha$ over an arbitrary number field, one gets a Severi--Brauer fibration over $\mathbb{P}^1_k$ using Hironaka's theorem, say $X_\alpha$.

\begin{corollary}
    Let $k$ be a number field. Given a cyclic algebra $\alpha$ over $k(t)$ with index $m$, the Brauer group of the associated Severi--Brauer fibration $X_\alpha$ is $m$-torsion.  
\end{corollary}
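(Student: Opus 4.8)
The plan is to run the argument from the proof of Theorem~\ref{p-torsion  brauer group}(b), with the prime $p$ replaced by the index $m$; concretely, what one proves is that $\overline{\Br}(X_\alpha)$ is $m$-torsion (this is the content of the statement, since $\Br_0 X_\alpha$, being a quotient of $\Br k$ by a finite subgroup, is certainly not $m$-torsion). Recall first that, as $\overline{\Br}$ is a birational invariant of smooth projective varieties, $\overline{\Br}(X_\alpha)$ depends only on the $k$-birational class of $X_\alpha$, hence only on its generic fibre --- the Severi--Brauer variety of $\alpha$ --- hence only on $\alpha$ and not on the chosen smooth model. Recall also that $\exp(\alpha)$ divides $\operatorname{ind}(\alpha)=m$.

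The crucial step is to split $\alpha$ after extension of scalars along a \emph{constant} field extension of $k$ of degree dividing $m$. In the situation of Construction~\ref{construction} we may write $\alpha=(a,g)$ with $a\in k^\times$ and $g\in k(t)^\times$; then $L:=k(\sqrt[m]{a})$ is, since $k$ contains a primitive $m$-th root of unity, a cyclic Galois extension of $k$ with $[L:k]\mid m$, and $\alpha\otimes_{k(t)}L(t)$ is split by Lemma~\ref{split algebra} because $a$ becomes an $m$-th power in $L$. Hence the generic fibre of $X_{\alpha,L}:=X_\alpha\times_k L\to\P^1_L$ is the Severi--Brauer variety of a split algebra, i.e.\ $\P^{m-1}_{L(t)}$, so $X_{\alpha,L}$ is $L$-birational to $\P^{m-1}_L\times_L\P^1_L$; in particular $\overline{\Br}(X_{\alpha,L})=0$.

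Next I would invoke the low-degree exact sequence of the Hochschild--Serre spectral sequence for $X_\alpha/k$, exactly as in the proof of Theorem~\ref{p-torsion  brauer group}(b): the vanishing $\overline{\Br}(X_{\alpha,L})=0$ yields a canonical isomorphism
\[
\overline{\Br}(X_\alpha)\;\cong\;H^1_{\et}\!\bigl(G,\ \text{Pic}\,X_{\alpha,L}\bigr),\qquad G:=\Gal(L/k),
\]
and the right-hand side is annihilated by $|G|=[L:k]$, which divides $m$. Thus $\overline{\Br}(X_\alpha)$ is $m$-torsion; combined with the injectivity of $\Br k\to\Br X_\alpha$ (proved by the Amitsur argument of Theorem~\ref{p-torsion  brauer group}(a)) this gives the claim, and taking $m=p$ prime recovers Theorem~\ref{p-torsion  brauer group}(b).

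The step I expect to be the real obstacle is the first one: producing a splitting field of $\alpha$ that is the extension of scalars of a constant extension $L/k$ with $[L:k]\mid m$. This is immediate when $\alpha$ comes from Construction~\ref{construction}, where one slot of the symbol lies in $k^\times$. For a completely general cyclic algebra of index $m$ over $k(t)$ the residues $\partial_x(\alpha)\in H^1(\kappa(x),\Q/\Z)$, whose orders divide $\exp(\alpha)\mid m$, need not be ``constant'', so no such $L$ need exist; one would then have to argue directly with the Galois module $\text{Pic}\,X_\alpha^s$ and the orders of these residues, the point being to do so without the explicit resolution-of-singularities bookkeeping that the rest of the paper is designed to avoid.
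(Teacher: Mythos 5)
Your proposal is correct and is essentially the paper's own argument: the paper disposes of this corollary with the single line ``same as the proof of Theorem \ref{p-torsion  brauer group}(b)'', i.e.\ split $\alpha$ over the constant extension $L=k(\sqrt[m]{a})$, observe that $X_{\alpha,L}$ is birational to $\P^{m-1}_L\times\P^1_L$ so $\overline{\Br}\,X_{\alpha,L}=0$, and use the low-degree Hochschild--Serre sequence to embed $\overline{\Br}\,X_\alpha$ into $H^1(\Gal(L/k),\text{Pic}\,X_{\alpha,L})$, which is annihilated by $[L:k]\mid m$. The two caveats you flag --- that the assertion really concerns $\overline{\Br}\,X_\alpha$ rather than $\Br\,X_\alpha$ itself, and that an arbitrary cyclic algebra over $k(t)$ (with $k$ not assumed to contain the $m$-th roots of unity) need not be a symbol $(a,g)$ with constant slot $a\in k^\times$, hence need not be split by a constant extension of degree dividing $m$ --- are imprecisions in the paper's own statement and one-line proof as well, so they are fair criticisms of the corollary as stated rather than gaps in your argument relative to the paper.
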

\begin{proof}
    Same as the proof of \ref{p-torsion  brauer group} $(2)$.
\end{proof}

\section{Applications}\label{applications}
\subsection{Subgroups capturing Brauer--Manin obstruction}\label{subgroups capturing BMO}

In this section, we construct a Severi--Brauer fibration \(X\) that satisfies the hypotheses of \Cref{pullback} for each odd prime \( p \). We begin by constructing a variety  as in \ref{construction} by carefully choosing a polynomial $f(x) \in k[x]$ and a constant $a \in k^{\times}$. The choice of the polynomial is guided by Lemma~\ref{required polynomial}. Then we can choose the constant depending upon the polynomial. This ensures that the variety $X_{a,f}$ has a $k$-point and a non-zero evaluation map at a non-archimedean place of $k$.

\begin{lemma}\label{required polynomial}
    For any given irreducible polynomial $g(x)\in k[x]$ of degree greater than 3, there exists a constant $c\in k$, a non-archimedean place $v$ of $k$, and a polynomial $h(x)\in k[x]$ such that the $v$-adic valuation of $g(c)$ is zero, $g(c)$ is not a $p$-th power in $k_v$, but $f(c)=g(c)h(c)$ is a $p$-th power in $k_v$, where $f(x)=g(x)h(x)$.
\end{lemma}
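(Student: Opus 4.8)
The plan is to reduce the statement to a valuation obstruction and then to repair $g(c)$ into a $p$th power entirely over $k$. Concretely, I would first produce a constant $c\in k$ and a non-archimedean place $v$ of $k$ with $v\bigl(g(c)\bigr)=1$. Since $1$ is prime to $p$ and every element of $(k_v^\times)^p$ has valuation divisible by $p$, such a $g(c)$ is automatically not a $p$th power in $k_v$, which is the first required property. For the second one, fix $\pi\in k^\times$ with $v(\pi)=1$, set $\lambda:=\pi^{p}/g(c)\in k^\times$, and choose $h\in k[x]$ with $1\le\deg h\le\deg g-1$ and $h(c)=\lambda$; for instance one may take $h$ to be a product of distinct linear polynomials over $k$, none vanishing at $c$, scaled so that $h(c)=\lambda$. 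Such an $h$ exists by interpolation, and since $\deg g>3$ there is room to arrange in addition that $h$ is separable and coprime to $g$ (the latter is automatic, as $g$ is irreducible of degree exceeding $\deg h$), so that $f=gh$ is squarefree with at least two irreducible factors, as the later application requires. Then
\[
f(c)=g(c)h(c)=g(c)\cdot\frac{\pi^{p}}{g(c)}=\pi^{p}\in (k^\times)^p\subseteq (k_v^\times)^p,
\]
which is exactly the second required property.

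\textbf{Producing $c$ and $v$.} This is the only step that needs an argument. After clearing denominators I may assume $g\in\mathcal{O}_k[x]$, and, $k$ being a number field of characteristic zero, $g$ is separable: there are $u,w\in k[x]$ with $ug+wg'=1$. Let $N\in\mathcal{O}_k\setminus\{0\}$ clear the denominators of $u$ and $w$, and let $S$ be the finite set of places dividing $N$. A nonconstant polynomial over $\mathcal{O}_k$ is divisible by infinitely many prime ideals, so I may pick a place $v\notin S$ and a $c_0\in\mathcal{O}_k$ with $v\bigl(g(c_0)\bigr)\ge 1$ (note $g(c_0)\neq 0$, as $g$ is irreducible of degree $\ge 2$). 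Multiplying the Bézout relation by $N$ and evaluating at $c_0$ gives $N=(Nu)(c_0)\,g(c_0)+(Nw)(c_0)\,g'(c_0)$ with both coefficients in $\mathcal{O}_k$; since $v(N)=0$ while $v\bigl((Nu)(c_0)g(c_0)\bigr)\ge 1$, we obtain $v\bigl(g'(c_0)\bigr)=0$. If $v\bigl(g(c_0)\bigr)=1$ we are done. Otherwise $v\bigl(g(c_0)\bigr)\ge 2$, and I replace $c_0$ by $c:=c_0+\pi$ for a suitable $\pi\in\mathcal{O}_k$ with $v(\pi)=1$: in the expansion $g(c_0+\pi)=\sum_{j\ge 0}\frac{g^{(j)}(c_0)}{j!}\pi^{j}$ every coefficient $\frac{g^{(j)}(c_0)}{j!}$ lies in $\mathcal{O}_k$ (the relevant binomial coefficients are integers), the $j=1$ term $g'(c_0)\pi$ has $v$-value exactly $1$, and all remaining terms have $v$-value at least $2$; hence $v\bigl(g(c)\bigr)=1$.

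\textbf{Main obstacle and a remark.} The genuine difficulty is confined to the previous paragraph, and it is not the existence of a place dividing a value of $g$ but the requirement that the divisibility be \emph{exact}; this is what forces us to discard the places in $S$ (so that the Bézout identity certifies $v(g'(c_0))=0$ whenever $v(g(c_0))\ge 1$) and to carry out the Taylor-expansion perturbation $c_0\mapsto c_0+\pi$. The remaining ingredients are formal: failure of $g(c)$ to be a $p$th power in $k_v$ is read off from its valuation, and the upgrade to an honest $p$th power $\pi^{p}$ takes place already over $k$ via the choice of $h$. Finally, the hypothesis $\deg g>3$ is not needed for the valuation argument (degree $\ge 2$ suffices there); it is there to leave enough freedom in the choice of $h$ to meet the structural constraints on $f$ demanded by the construction of the following section.
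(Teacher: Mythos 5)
Your proof is correct, but it takes a genuinely different route from the paper. The paper looks at the curve $y^p=g(x)$: since $\deg g>3$ its genus exceeds $1$, so by Faltings' theorem only finitely many $c\in k$ make $g(c)$ a $p$-th power in $k$; picking $c$ outside this finite set, a local--global principle for $p$-th powers (valid for odd $p$) produces a place $v$ where $g(c)\notin (k_v^\times)^p$, and then $h$ is chosen by interpolation (e.g.\ $h(x)=x-c+l$) so that $g(c)h(c)\in(k_v^\times)^p$. You instead avoid Faltings entirely: an elementary argument (primes dividing polynomial values, the B\'ezout trick to force $v(g'(c_0))=0$, and the Taylor perturbation $c_0\mapsto c_0+\pi$) yields $v(g(c))=1$, which rules out $g(c)$ being a $p$-th power for valuation reasons, and your choice $h(c)=\pi^p/g(c)$ even makes $f(c)$ a $p$-th power in $k$ itself; as you note, this also shows the hypothesis $\deg g>3$ is not needed for the lemma as stated. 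What the paper's heavier route buys is flexibility in the place $v$: since $g(c)$ fails to be a $p$-th power globally, one can (with a little extra work, e.g.\ Chebotarev) choose $v\nmid p$ at which $g(c)$ is a unit that is not a $p$-th power, so that $k_v(\sqrt[p]{g(c)})/k_v$ is \emph{unramified} of degree $p$ --- which is exactly what the proof of Proposition \ref{Main lemma}(b) uses when it asserts $v(N(L^\times))=p\mathbb{Z}$ and concludes $a\notin N(L^\times)$ from $v(a)=1$. Your construction, by design, forces $v(g(c))=1$, so $k_v(\sqrt[p]{g(c)})$ is ramified and its norm group contains elements of valuation $1$ (e.g.\ the norm of $\sqrt[p]{g(c)}$ is $\pm g(c)$); thus, while your proof of the lemma as stated is valid and more elementary, the pair $(c,v)$ it produces could not be fed into the paper's subsequent argument without modification. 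Minor point: the fact that a nonconstant $g\in\mathcal{O}_k[x]$ has values divisible by infinitely many primes is standard but should be cited or proved (Schur-type argument or Chebotarev).
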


\begin{proof}
    Consider the curve $C$ defined by homogenisation of $y^p-g(x)$ in $\P^{2}_k$. Let $d$ denote the degree of $C$. We know that the genus of $C$ is $(d-1)(d-2)/2$ \cite[Proposition 2.15]{rm} which is greater than 1 by the hypothesis on $d$. According to Falting's theorem, $C$ has only finitely many $k$ points. In particular, there exists a $c\in k$ such that $g(c)$ is not a $p$-th power in $k$. Consequently, by the local-global principle, there is a place $v$ of $k$ such that $g(c)$ is not a $p$-th power in $k_v$ and the $v$-adic valuation of $g(c)$ is zero.

    Consider $l\in k$ such that $g(c).l$ is a $p$-th power in $k_v$ and let $h(x)$ be a polynomial in $k[x]$ such that $h(c)=l$ (we can choose this by interpolation method, for example, $h(x)=x-c+l$). Taking $f(x)=g(x)h(x)$ yields $f(c)=g(c).l$, which is a $p$-th power in $k_v$.
\end{proof}

We fix an odd prime \( p \) and consider a monic irreducible polynomial \( f_1(x) \) of degree greater than 3. By above lemma, there exist a monic polynomial \(h(x) \), a constant \( c \in k \), and a non-archimedean place \( v \) such that \( f_1(c) \) is not a \( p \)-th power in \( k_v \), but \( f(c) \) is a \( p \)-th power in \( k_v \), where \( f(x) = f_1(x)h(x) \). Let \( h(x) = f_2(x) \cdots f_n(x) \), where each \( f_j(x) \)  is an irreducible factor of \( h(x) \). Next, we choose\footnote{Any finite degree extension of $k$ contains the $p$th power of only finitely many elements of the set $\{v\ell: \ell \text{ are primes in } k\}$} an element \( a \in k^\times \) such that \( a \) is not a \( p \)-th power in \( k[u]/\langle f_i(u) \rangle \) for each \( i \in \{1, 2, \dots, n\} \), and \( v(a) = 1 \). For this choice of \( a \) and \( f \), let \(\tilde{X}\) be the variety \( X_{a,f} \) over \(\mathbb{P}^1\) constructed as in \ref{construction}.  \\
In the next proposition, we examine certain local properties of our nice rationally connected variety \( X_{a,f} \).

	\begin{proposition}\label{Main lemma}

		The variety $\tilde{\pi} : \tilde{X} \to \mathbb{P}_k^1$ constructed above satisfies the following.
		
		\begin{enumerate}[label=\arabic*.]
			\item For all Archimedean places $w$, the Severi--Brauer fibration $\tilde{X}_{k_w}$ is birational to a $\mathbb{P}^{p-1}$ bundle.
			\item There exists a non-Archimedean place $v$ of $k$ not lying over $p$ and a subgroup $H$ of $\Br\tilde{X}$ such that the map $\tilde{\phi}_{v} : \tilde{X}(k_{v}) \to \mathrm{Hom}(H, \mathbb{Q}/\mathbb{Z})$ is nonzero.
			\item For all places $\omega$, we have $0 \in \im (\tilde{\phi}_{\omega})$.	
		\end{enumerate}
		
	\end{proposition}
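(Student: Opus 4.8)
The plan is to verify each of the three local conditions using the explicit choices of $a$, $f = f_1 h$, and the place $v$ that were fixed just before the statement.

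For part (a): at an Archimedean place $w$, the completion $k_w$ is either $\R$ or $\C$. Since $k$ contains a primitive $p$th root of unity and $p$ is odd, $k$ has no real places, so every $k_w = \C$ (or more generally $k_w$ is algebraically closed if we only used the root-of-unity hypothesis loosely). Over an algebraically closed field every element, in particular $a$, is a $p$th power; hence the generic fiber $(a,f)$ becomes split over $k_w(t)$, and by Lemma \ref{SB variety and rational point} each fiber of $\tilde X_{k_w} \to \P^1_{k_w}$ is a projective space. More precisely, as in the proof of Theorem \ref{p-torsion brauer group}(b), base-changing to a field $L$ with $a \in L^{\times p}$ makes $X_L$ birational to $\P^{p-1}_L \times \P^1_L$; applying this with $L = k_w$ gives the claim. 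I would spell out that the smooth model $\tilde X$ is birational over $k_w$ to the projective-space bundle because birational type is determined by the generic fiber.

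For part (b): take $v$ to be the non-Archimedean place produced by Lemma \ref{required polynomial}, which by construction does not lie over $p$ (we may arrange this, or note that $v(a)=1$ forces $v$ to be a finite place; if $v \mid p$ one re-runs the argument to avoid the finitely many places above $p$). Set $H = \langle \tilde\pi^*(a,f_1) \rangle \subset \Br\,\tilde X$, which is a legitimate subgroup by Theorem \ref{p-torsion brauer group}(c). The point with $t$-coordinate $c$ lies in $U = \P^1 \setminus V(F)$ since $f(c)$ is a nonzero $p$th power in $k_v$ (in particular $f(c) \neq 0$, so $c$ avoids the roots of $f$), and the fiber $\tilde X_c$ is the Severi--Brauer variety of the cyclic algebra $(a, f(c))$ over $k_v$; since $f(c) \in k_v^{\times p}$ this algebra is split, so $\tilde X_c(k_v) \neq \emptyset$ and we obtain a point $P_v \in \tilde X(k_v)$ lying over $c$. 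Now I must evaluate $\tilde\pi^*(a,f_1)$ at $P_v$: by functoriality this equals the class of the cyclic algebra $(a, f_1(c))$ in $\Br\,k_v$. Since $f_1(c)$ is \emph{not} a $p$th power in $k_v$, Lemma \ref{split algebra} shows $(a, f_1(c))$ is split over $k_v$ \emph{if and only if} $a$ is a norm from $k_v(\sqrt[p]{f_1(c)})$; here I use $v(a) = 1$ and $v(f_1(c)) = 0$ (indeed $f_1(c)$ is a unit at $v$ since $g(c)=f_1(c)$ is not a $p$th power but, after possibly adjusting, has valuation $0$) to conclude via a ramification/valuation computation on the local cyclic algebra that $\mathrm{inv}_v (a, f_1(c)) \neq 0$. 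Concretely, $(a, f_1(c))_\omega$ with $v(a)=1$ and $f_1(c)$ a non-$p$th-power unit is a non-split local cyclic algebra (its invariant is $v_v(a) \cdot (\text{the class of } f_1(c))$ under the tame symbol), so $\tilde\varphi_v(P_v)$ sends the generator of $H$ to a nonzero element of $\Q/\Z$, hence $\tilde\varphi_v$ is nonzero.

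For part (c): by Theorem \ref{p-torsion brauer group} the group $\Br\,\tilde X$ is generated modulo $\Br_0$ by the classes $\tilde\pi^*(a,f_i)$, so it suffices to exhibit, at each place $\omega$, an adelic-point component that pairs to $0$ with every such class — equivalently a $k_\omega$-point over a place where $a$ becomes a $p$th power or where some $f_i$ does. At Archimedean $\omega$ this is immediate from part (a) since $\Br\,\tilde X_{k_\omega} = \Br\,k_\omega$ and the evaluation of any $\tilde\pi^*\beta$ is constant. At a non-Archimedean $\omega$: $\tilde X$ is proper over the number field $k$, so $\tilde X(k_\omega) \neq \emptyset$ for all but finitely many $\omega$, and at such places $\omega$ the cyclic algebra $(a,f)$ has a smooth reduction argument forcing $\mathrm{inv}_\omega \tilde\pi^*(a,f_i)(P_\omega) = 0$ (unramified classes evaluated at points reducing to the good locus are trivial); for the remaining finitely many bad $\omega$ one picks a point over a $t$-value $t_0$ with $a \in k_\omega(t_0)^{\times p}$ or with $f_i(t_0) \in k_\omega^{\times p}$, which exists because $k_\omega^{\times}/k_\omega^{\times p}$ is finite and $f_i$ takes a $p$th-power value somewhere on $\P^1(k_\omega)$ (e.g. apply the same Weil-restriction/Faltings-free density argument, or simply note that over the local field $k_\omega$ the variety $X_{k_\omega}$ has a rational point since it has points everywhere locally and $\P^1$-bundles over local fields satisfy the relevant weak approximation). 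The main obstacle is part (b): one has to pin down precisely that the \emph{local} cyclic algebra $(a,f_1(c))$ over $k_v$ is non-split, i.e.\ carry out the tame-symbol/valuation computation at $v$ using $v(a)=1$ and the non-$p$th-power status of $f_1(c)$, and to make sure the chosen $v$ really avoids the primes above $p$ so that the tame symbol is the correct tool.
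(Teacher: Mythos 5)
Your parts (a) and (b) follow essentially the same route as the paper: at Archimedean places $a$ becomes a $p$th power (indeed $k$ has no real places since $\zeta_p\in k$, and even over $\R$ every element is a $p$th power for odd $p$), so the generic fiber splits and $\tilde X_{k_w}$ is birational to a $\P^{p-1}$-bundle; and for (b) you evaluate $\tilde\pi^*(a,f_1)$ at a $k_v$-point over $c$, getting $(a,f_1(c))$, and use $v(a)=1$ together with $f_1(c)\notin k_v^{\times p}$ to see this local symbol is non-split. The paper phrases the last step via the norm group of the unramified degree-$p$ extension $k_v(\sqrt[p]{f_1(c)})$ rather than the tame symbol; these are the same computation, and both versions (yours explicitly, the paper's implicitly) rest on $f_1(c)$ being a unit (or at least $v(f_1(c))\equiv 0 \bmod p$) at $v$ -- you flag this honestly, so I only note that your parenthetical ``after possibly adjusting, has valuation $0$'' is an assertion, not an argument.

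Part (c) is where you genuinely diverge, and where your argument has a gap. The paper's proof is one line: since $f$ is monic, $\tilde X$ has a \emph{global} point $Q$ over $\infty$, and $Q^*(a,f_1)=(a,1)=0$, so $\tilde\varphi_\omega(Q)=0$ for every place $\omega$ simultaneously. Your place-by-place substitute has two problems. First, at the ``remaining finitely many bad $\omega$'' you need a single $t_0\in\P^1(k_\omega)$ for which \emph{both} the fiber has a $k_\omega$-point (i.e.\ $(a,f(t_0))$ splits) \emph{and} the evaluation vanishes (i.e.\ $(a,f_1(t_0))$ splits); choosing $t_0$ so that one particular $f_i(t_0)$ is a $p$th power does not give the other condition, and your fallback justification (``$X_{k_\omega}$ has points everywhere locally and $\P^1$-bundles over local fields satisfy weak approximation'') is circular, since the existence of a $k_\omega$-point at these places is exactly what is being proved. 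The statement can be repaired in the spirit of the paper by taking $t_0$ a large $p$th power, so that all the monic $f_i(t_0)$ are simultaneously $p$th powers in $k_\omega$ -- but that is precisely the point-at-infinity trick done locally, which you did not carry out. Second, your opening claim that $\mathrm{Br}\,\tilde X$ is generated modulo constants by the classes $\tilde\pi^*(a,f_i)$ is not established anywhere (Theorem \ref{p-torsion  brauer group} only says these classes lie in $\mathrm{Br}\,\tilde X$ and are nonzero modulo constants); fortunately it is also unnecessary, since $\tilde\varphi_\omega$ is only pairing against the cyclic subgroup $H=\langle(a,f_1)\rangle$.
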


	\begin{proof}
		(1) The generic fiber of $\tilde{X}$ is the Severi--Brauer variety associated with the cyclic algebra $(a, f)$. After base change to an Archimedean place $w$, $a$ becomes a $p$th power, that is the algebra $(a, f)$ splits over $k_w$. Using Lemma \ref{SB variety and rational point}, we can conclude that the resulting scheme $\tilde{X}_{k_w}$ is birational to a $\mathbb{P}^{p-1}_{k_w}$ bundle. 
		
		(2) Let $v$ be the place coming from Lemma \ref{required polynomial}. Then $f(c)$ is a $p$th power in $k_v$, so the cyclic algebra \((a, f(c)) \) splits in \(\Br k_{v}\), thus the fiber \(\tilde{X}_c\) has a \(k_{v}\)-point, say \(P\). Let \( H\) be the subgroup generated by \((a,f_1)\) inside \(\Br\tilde{X} \). Consider the map $\tilde{\phi}_{v} : \tilde{X}(k_{v}) \to \mathrm{Hom}(H, \mathbb{Q}/\mathbb{Z})$ where \(\tilde{\phi}_{v}(P)\)  sends \((a, f_1)\) to \(\mathrm{inv}_{v}(a, f_1(c))\), as $P$ is above $[c:1]\in \P^1(k_v)$ and $P^{*}(a,f_1)=(a,f_1(c))$. Now, as $f_1(c)$ is not a $p$th power and the \(v\)-adic valuation of \(f_1(c)\) is non-zero, \( k_v(\sqrt[p]{f_1(c)}) \) is a degree-\(p\) unramified extension of \( k_v \), say $L$. Then the valuation of the norm group is given by \( v(N(L^{\times})) = p\mathbb{Z} \). By our choice of \( a \) we have \( v(a) = 1 \), which implies that \( a \) does not belong to the norm group. Consequently, by Lemma \ref{split algebra}, \( (a, f_1(c)) \) does not split in $k_{v}$, thus \(\mathrm{inv}_v(a, f_1(c))\) is non-zero.

		(3) Since \( f \) is monic, the variety \( \tilde{X} \) has a \( k \)-point \( Q \) over infinity which implies \( Q^*(a, f_1) = (a, 1) = 0 \). It follows that \( \tilde{\phi}_w(Q) = 0 \) for all places $\omega$.
	\end{proof}

    In the following theorem, we construct a Severi--Brauer fibration that
    violates the Hasse principle, without resorting to explicit calculations,
    by pulling back \(\tilde{X}\) along a polynomial \(g\). We will prove Theorem~\ref{main}, as a particular case of the following theorem	.
    
	\begin{theorem} \label{SB fibration violating Hasse}
		There exists a smooth projective geometrically integral rationally connected variety $X$ over $k$ such that
		\begin{enumerate}[label=\arabic*.]
			\item  \( \overline{\Br}X \) is a non-trivial $p$-torsion group,
			\item $X$ has $k_{\nu}$ points for all places $\nu$ of $k$,
			\item The Brauer--Manin set \( X(\mathbb{A}_k)^B \) is empty, where \( B \) is a subgroup of \( \overline{\Br}X \).
		\end{enumerate}
	\end{theorem}
	
	\begin{proof}

   This proof follows from Theorem~\ref{pullback}. Let \(\Tilde{X}\) over 
\(\mathbb{P}^1_k\) be the variety considered in Proposition~\ref{Main lemma}, 
and set \(\alpha = \tilde{\pi}^*(a,f_1)\). Proposition~\ref{Main lemma} shows 
that both \(\Tilde{X}\) and \(\alpha\) satisfy the hypotheses of Theorem~\ref{pullback}. Let \(X \to \mathbb{P}^1\) be the base change of 
\(\pi : \Tilde{X} \to \mathbb{P}^1_k\) by the polynomial 
\(g \in k[t]\) provided by the theorem. Define \(B\) to be the subgroup of 
\(\Br X\) generated by the image of \(\alpha\), that is
\( B = \langle \pi^*(a, g \circ f_1) \rangle .\) Parts~(2) and~(3) of the theorem follow.
Moreover, the polynomial \(g\) can be chosen so that each 
\(f_i \circ g\) is irreducible and separable over \(k(\sqrt[p]{a})\),
which ensures that \(a\) is not a \(p\)th power in \(k[u]/(f_i \circ g)\). 
Hence, by Theorem~\ref{p-torsion brauer group}, we conclude that 
\(\overline{\Br}X\) is a nonzero \(p\)-torsion group. This establishes part~(1).

\end{proof}

\begin{proof}[{Proof of Corollary \ref{sub of main}}]
	Consider the nice $k$-variety $X$  constructed in Theorem~\ref{SB fibration violating Hasse}. The fact that \( \overline{\Br}X \) is \( p \)-torsion implies that all of its non-trivial elements have order $p$. In particular, the subgroup $\langle (a, f_1 \circ g) \rangle$, which captures the obstruction is isomorphic to $\Z/p\Z$. We conclude that $\Z/p\Z$ can capture Brauer--Manin obstruction for any odd prime $p$. 
\end{proof}

\subsection{Varieties with index one failing Hasse principle}\label{index one failing Hasse}
In this subsection we will apply the pullback method to construct a variety which is everywhere locally
soluble and has index one but no global point.

\begin{lemma}\label{unit and splitting}
Let $(a,b)$ denote a cyclic algebra of degree $n$ over a number field $k$ containing $\zeta_n$. 
If for a place $v$ of $k$ both $a$ and $b$ are units in $\mathcal{O}_v$, then the associated Severi--Brauer variety has a $k_v$-point.
\end{lemma}

\begin{proof}
This result is standard, but we provide a proof for the reader’s convenience. Let $\mathcal{O}_v$ be the valuation ring at $v$, which is a discrete valuation ring with residue field $\kappa$. 
We have the following exact sequence of Brauer groups
\[
0 \to \Br\mathcal{O}_v \to \Br k_v 
\xrightarrow{\Res} H^1(\kappa,\Q/\Z).
\]
The residue map can be described explicitly as
$$ \Res(a,b) = 
(-1)^{v(a)v(b)} \cdot \frac{a^{v(b)}}{b^{v(a)}} 
\in \kappa^\times / \kappa^{\times n} 
\subset H^1(\kappa,\Q/\Z).$$
If $a,b \in \mathcal{O}_v^\times$, then $v(a)=v(b)=0$, hence $\Res(a,b)=0$. 
It follows that $(a,b) \in \Br \mathcal{O}_v$. 
Since $\Br \mathcal{O}_v=0$\cite[Corollary 6.9.3.]{poonen}, we conclude that $(a,b)$ is split over $k_v$. Thus the corresponding Severi--Brauer variety has a $k_v$-point.
\end{proof}

\begin{lemma}\label{closed point of certain degree}
Let $f = \prod_{i=1}^r f_i$ be the factorization of $f$ into irreducible polynomials over a number field $k$ containing the $p$th roots of unity. 
Then the Severi--Brauer fibration $X_{a,f}$ has closed points of degree $\deg(f_i)$.
\end{lemma}

\begin{proof}
By \cite[Theorem 74]{kol}, the Severi--Brauer variety associated to the cyclic algebra $(a,f(t))$ is birational to the norm hypersurface
\[
N_{k(\sqrt[p]{a})(t)/k(t)}(\vec{z}) = f(t),
\]
where $N_{k(\sqrt[p]{a})(t)/k(t)}$ denotes the norm from $k(\sqrt[p]{a})(t)$ to $k(t)$. The fibration $X_{a,f}$, obtained by spreading out the Severi--Brauer variety over $\mathbb{P}^1_k$, is thus birational to the corresponding spreading out of this norm hypersurface, which we denote by $N_{a,f}$. At points where $f(t)=0$, the fibers of both $X_{a,f}$ and $N_{a,f}$ are degenerate. 
Let $\theta_i$ be a root of $f_i$. Then over $t=\theta_i$, the variety $N_{a,f}$ admits a $\kappa(\theta_i)$-rational point, namely $(\vec{z}=0,\; t=\theta_i).$
This gives a closed point of degree $\deg(f_i)$.  Since $X_{a,f}$ and $N_{a,f}$ are birational, it follows that $X_{a,f}$ also possesses a closed point of degree $\deg(f_i)$.
\end{proof}
We are in a position to prove the second main theorem of the paper.
\begin{proof}[{Proof of Theorem \ref{main}}]
Let $a=2$ and $f(t)=(t^p+\zeta_p)(t^p-\zeta_p^{-1}).$
Consider the cyclic algebras $A_1=(2,t^p+\zeta_p), A_2=(2,t^p+\zeta_p^{-1}).$ Since $2$ is not a $p$th power in $k(\sqrt[p]{\zeta_p})$, it follows from Theorem~\ref{p-torsion  brauer group} that $A_1$ and $A_2$ define nontrivial elements of $\overline{\Br}X_{2,f}$ and represent the same class in it. We note that $2$ is unramified in $\Q(\zeta_p)$. Let $m$ be the smallest integer such that $2^m\equiv1\pmod{p}$.
We now show that if $v$ is place not dividing $2$ then for any $P_v \in X_{2,f}(k_v)$ we have $\inv_v A(P_v)=0$, where $A$ denotes either $A_1$ or $A_2$.

    \emph{Archimedean places.}  
    At an archimedean place $v$, the element $2$ is a $p$th power in $k_v$. 
    Hence for $P_v \in X_{2,f}(k_v)$ with $\pi(P_v)=[x:1]$,
    $ P_v^*(2,t^p+\zeta_p) = (2,x^p+\zeta_p) = 0 \in \Br k_v,$ so $\inv_v A(P_v)=0$.
    
    \emph{Non-archimedean places not dividing $2$.}  
    Let $v$ be such a place and let $P_v \in X_{2,f}(k_v)$ with $\pi(P_v)=[x:1]$.
    
        If $v(x)<0$, then $v(x^p+\zeta_p)=p\cdot v(x),$ hence $x^p+\zeta_p = u \cdot \ell^p$ for some \(\ell\) with $u \in \mathcal{O}_v^\times$. 
        Since $2$ is also a unit at such $v$, Lemma~\ref{unit and splitting} gives $(2,x^p+\zeta_p)=(2,u)=0 \in \Br k_v.$

         If $v(x)\geq 0$, then at least one of $x^p+\zeta_p$ or $x^p-\zeta_p^{-1}$ must be a unit. Otherwise, both would vanish modulo maximal ideal $\mathfrak{m}_v$, implying $\zeta_p+\zeta_p^{-1}\equiv 0 \pmod{\mathfrak{m}_v}$, so $\zeta_p^2\equiv -1 \pmod{\mathfrak{m}_v}.$
        Consequently $\zeta_p^4 \equiv 1 \pmod{\mathfrak{m}_v}$. 
        Since $\zeta_p^p \equiv 1 \pmod{\mathfrak{m}_v}$ and $(4,p)=1$, we deduce 
        $\zeta_p \equiv 1 \pmod{\mathfrak{m}_v}$, which further implies 
        $\zeta_p^{-1}\equiv 1 \pmod{\mathfrak{m}_v}$. Thus $2\equiv0 \pmod{\mathfrak{m}_v}$, 
        contradicting the assumption that $v$ is not above $2$. 
        Hence one of $x^p+\zeta_p$ or $x^p-\zeta_p^{-1}$ is a unit, and by choosing the appropriate 
        representative ($A_1$ or $A_2$), we obtain $(2,u)=0 \in \Br k_v.$ Therefore if $v$ is place not dividing $2$ then for any $P_v \in X_{2,f}(k_v)$, $\inv_v A(P_v)=0$,

\emph{Non-archimedean places dividing $2$}. We claim that there exists $P_v \in X_{2,f}(k_v)$ such that $\operatorname{inv}_v A(P_v)\neq 0$. By our choice of model of $X_{2,f}$, over the point $[0:1]$ the fiber is the Severi--Brauer variety corresponding to $(2,-1)=0 \in \Br k$, and hence has a $k$-rational point $P$.  Let $P_v$ denote its image in $k_v$. Then $P_v^*(2,t^p+\zeta_p)=(2,\zeta_p)$. Since $\zeta_p$ is not a $p$th power in $k_v$, $L_v=k_v(\sqrt[p]{\zeta_p})$ is a degree-$p$ unramified extension of $k_v$.
This implies $v(N_{L_v/k_v})=p\Z$, but $v(2)=1$, thus we conclude $2$ is not a norm and hence $\inv_v A(P_v)\neq 0$.

We apply the pullback method to the Brauer class $A_1$, taking  $v_1$ to be a prime above $2\in\Omega_k$ as in \Cref{SB fibration violating Hasse}. 
By \cite[Lemma~4.3]{berg}, we have $q = 2^m$ in this case. 
Therefore, the degree of the polynomial $g$ used in the pullback must be even, and so is $\deg(f \circ g)$. It then follows from Lemma~\ref{closed point of certain degree} that the variety $X_{2, f \circ g}$ admits a closed point of even degree. Since the good fibers are Severi--Brauer varieties of degree $p$, they have closed points of degree $p$. Combining these, we deduce that the variety $X_{2, f \circ g}$ has index one, and by proof of Corollary~\ref{sub of main}, $X_{2, f \circ g}$ has Brauer--Manin obstruction to the existence of rational points given by a single Brauer class of order $p$.
\end{proof}


\textbf{Statements and declarations:} The authors have no competing interests to declare that are relevant to the content of this article.

\vspace{2mm}

\textbf{Acknowledgements:}
    M. Biswas was partially supported by PMRF (ID: 0703045) and UC Doctoral Scholarship (UCID: 96651612). D.C.R was partially supported by DST-INSPIRE Fellowship (Reg No: IF210208). B. Samanta was partially supported by PhD Fellowship (File No: 09/936(0315)/2021-EMR-I) of CSIR, India. The authors thank the organisers of the `Instructional Workshop on Rational Points,' funded by NWO through an ENW-XL grant, held at Groningen for providing a platform for valuable discussions. B. Samanta is thankful to Infosys travel grant for supporting his participation in the workshop. The authors wish to express their gratitude to Amit Hogadi for his continuous support and guidance throughout this project. The authors are grateful to Anthony V\'{a}rilly-Alvarado, Brendan Creutz, and Bianca Viray for their valuable comments and suggestions. The authors would like to thank the anonymous referee for their helpful comments and suggestions, which significantly improved the exposition of the manuscript.

\end{document}